\newtheorem{theorem}{Theorem}
\newtheorem{proposition}{Proposition}
\newtheorem{lemma}{Lemma}
\theoremstyle{definition}
\newtheorem{remark}{Remark}
\newcommand {\bl} {{\boldsymbol \lambda}}
\newcommand {\bs} {\boldsymbol}
\newcommand {\rk} {\mathop{\rm rank}\nolimits}
\newcommand{\bo}{{\boldsymbol \omega}}
\newcommand{\bd}{{\mathcal B}}
\begin{document}

\title{Bifurcation diagrams and critical subsystems\\
of the Kowalevski gyrostat in two constant fields}

\author{Mikhail P. Kharlamov\\
\it\small Gagarin Street 8, Volgograd, 400131 Russia\\
\it\small e-mail: mharlamov@vags.ru}

\date{}

\maketitle

\begin{abstract}
The Kowalevski gyrostat in two constant fields is known as the
unique example of an~integrable Hamiltonian system with three
degrees of freedom not reducible to a family of systems in fewer
dimensions and still having the clear mechanical interpretation. The
practical explicit integration of this system can hardly be obtained
by the existing techniques. Then the challenging problem becomes to
fulfil the qualitative investigation based on the study of the
Liouville foliation of the phase space. As the first approach to
topological analysis of this system we find the stratified critical
set of the momentum map; this set consists of the trajectories with
number of frequencies less than three. We obtain the equations of
the bifurcation diagram in three-dimensional space. These equations
have the form convenient for the classification of the bifurcation
sets induced on 5-dimensional iso-energetic levels.
\end{abstract}

\par\noindent
MSC: 70E17, 70G40, 70H06

\par\noindent
PACS: 45.20.Jj, 45.40.Cc

\par\noindent
Key Words: Kowalevski gyrostat, two constant fields, critical set,
bifurcation diagram



\section{Introduction}\label{sec1}

The famous integrable case of S.\,Kowalevski of the motion of a
heavy rigid body about a fixed point \cite{Kowa} has received
several generalizations. Some of them suppose restrictions to
submanifolds in the phase space (partial cases), others are far from
mechanics involving potential functions on the configuration space
$SO(3)$ with singularities. The most essential generalization having
the clear mechanical sense was found by A.G.\,Reyman and
M.A.\,Semenov-Tian-Shansky in the work \cite{ReySem}. The authors
introduce the dynamical system on the dual space of the Lie algebra
$e(p,q)$ of the Lie group defined as the semi-direct product of
$SO(p)$ and $q$ copies of ${\mathbb{R}}^p$. Such systems are known
as the Euler equations on Lie (co)algebras \cite{Bogo}. The case
$p=3,q=2$ corresponds to the Euler--Poisson equations of the motion
of a gyrostat in two constant fields.

For a rigid body without gyrostatic momentum, the model of two
constant fields was introduced by O.I.\,Bogoyavlensky \cite{Bogo}.
The physical object can be either a heavy electrically charged rigid
body rotating in gravitational and constant electric fields, or a
heavy magnet rotating in gravitational and constant magnetic fields.
The corresponding equations are Hamiltonian on the orbit of
coadjoint action on $e(3,2)^*$ of the Lie group defined as the
semi-direct product
${SO(3)\times({\mathbb{R}}^3\otimes{\mathbb{R}}^3)}$. The typical
orbit is diffeomorphic to $TSO(3)\cong {\mathbb{R}}^3\times SO(3)$.
Therefore, the gyrostat in two constant fields is the Hamiltonian
system with three degrees of freedom. Bogoyavlensky \cite{Bogo}
suggested the conditions of the Kowalevski type and found the
analogue of the Kowalevski integral $K$ for the top in two constant
fields. H.\,Yehia \cite{Yeh} generalized this integral for the
Kowalevski gyrostat in two constant fields. Almost simultaneously
with Yehia, I.V.\,Komarov \cite{Komar} and L.N.\,Gavrilov
\cite{Gavril} proved the Liouville integrability of the Kowalevski
gyrostat in the gravity field. But for two constant fields the
Kowalevski gyrostat was not considered integrable due to the fact
that the existence of the second field destroys the axial symmetry
of the potential and, consequently, the corresponding cyclic
integral. Finally, Reyman and Semenov-Tian-Shansky \cite{ReySem}
found the Lax representation with a spectral parameter for the
family of Euler equations on $e(p,q)^*$. For $e(3,2)$ this
representation immediately gave rise to the new integral for the
Kowalevski gyrostat in two constant fields. For the classical
Kowalevski top this integral turns into the square of the cyclic
integral.

The Kowalevski gyrostat in two constant fields does not have any
explicit symmetry groups and, therefore, is not reducible, in a
standard way, to a family of systems with two degrees of freedom.
Phase topology of such systems has not been studied yet. The theory
of $n$-dimensional integrable systems started in \cite{FomSG} is not
illustrated by an application to any real irreducible physical or
geometrical problem with $n > 2$.

In the paper \cite{BobReySem}, the authors give a detailed
exposition of the results of \cite{ReySem} as well as a study of the
algebraic geometry of the Lax pair for the generalized Kowalevski
system. They announce the possibility of its integration by the
finite-band techniques and fulfil such integration for the classical
top. For two constant fields the integration of the Kowalevski top
is not given up-to-date. The problem of the Kowalevski {\it
gyrostat} motion in two constant fields is not studied at all. The
technical difficulties here are extremely high. It is not likely
that, in the general regular case, the analytical solutions can be
obtained having the form useful for the qualitative topological
analysis or the computer simulation. However, there is a good
experience of studying the critical subsystems, i.e., the systems
with two degrees of freedom induced on 4-dimensional invariant
submanifolds of the phase space. For the Kowalevski top in two
constant fields we have now the complete description of all
singularities of the momentum map \cite{ZotRCD}, \cite{Kh32},
\cite{Kh34}, \cite{KhRCD1}, \cite{KhSav}, \cite{KhRCD2} and the
classification of the bifurcation diagrams for the restriction of
this map to \mbox{5-dimensional} iso-energetic surfaces
\cite{KhZot}, \cite{Kh361}, \cite{Kh362}. This result is a necessary
and highly complicated part of the study of Liouville foliation of
the integrable system and shows the actual need in the
generalization of the Liouville invariants theory \cite{BolsFom} for
the dimensions greater than two.

The present paper contains similar results for the Kowalevski
gyrostat in two constant fields. The 6-dimensional phase space is
stratified by the rank of the momentum map. We find the equations of
invariant submanifolds on which the induced systems are Hamiltonian
with less than three degrees of freedom (critical manifolds of rank
0,1, or 2). We straightforwardly prove that the image of these
critical manifolds (the bifurcation diagram) lies in the
discriminant set of the algebraic curve of the Lax representation
given in \cite{BobReySem}. Moreover, the spectral parameter on the
Lax curve is explicitly expressed in terms of the constant $s$ of
the additional partial integral arising on the critical
submanifolds. It then follows that the equations of the surfaces
containing the bifurcation diagram are written in the parametric
form such that the parameters are the energy constant $h$ and the
constant $s$ of the partial integral. Fixing the value of $h$ we
come to explicit equations of the bifurcation diagrams induced on
iso-energetic levels. The problem of classification of these
diagrams seems quite complicated due to the existence of several
physical parameters. Nevertheless, it is certainly solvable with the
help of contemporary computer programs of analytical calculations.

First we show that the number of physical parameters for the
gyrostat in two constant fields can be reduced by a simple
procedure, which may be called the orthogonalization of the fields.
More precisely, for the problems of gyrostat motion there exists a
group of diffeomorphisms of the phase spaces (mentioned above orbits
of the coadjoint action) that is an equivalence group for the
corresponding dynamical systems. It appears that each equivalence
class contains a problem with an orthonormal pair of radius vectors
of the centers of forces application and with an orthogonal pair of
the intensity vectors. Such force field is characterized by only one
essential parameter---the ratio of the modules of the intensity
vectors. For a dynamically symmetric gyrostat having the centers of
forces application in the equatorial plane, the orthogonalization
procedure along with the appropriate choice of the measure units
leave, in addition to the forces ratio, only two physical parameters
of the body itself, namely, the ratio of the equatorial and axial
inertia moments and the non-zero axial component of the gyrostatic
momentum. In the generalized Kowalevski case the first of them
equals 2. Thus, the whole problem has, in fact, two essential
parameters. In particular, each of the critical four-dimensional
submanifolds found below provides a two-parametric family of
completely integrable Hamiltonian systems with two degrees of
freedom.

\section{Gyrostat equations and parametrical reduction}\label{sec2}

Consider a rigid body $\bd$ rotating around a fixed point $O$.
Choose a trihedral at $O$ moving along with the body and refer to it
all vector and tensor objects. Denote by ${\bf e}_1{\bf e}_2{\bf
e}_3$ the canonical unit basis in ${\mathbb{R}}^3$; then the moving
trihedral itself is represented as $O{\bf e}_1{\bf e}_2{\bf e}_3$.
Let $\bo$ be the vector of the angular velocity of $\bd$. Suppose
that $\bd$ is bearing an axially symmetric rigid rotor $\bd'$
rotating freely around its symmetry axis fixed in $\bd$. Such system
of two bodies is the simplest model of a gyrostat. The notion of a
gyrostat was introduced by N.E.\,Zhukovsky \cite{Zhuk} for a body
having cavities totally filled with homogeneous fluid. Both models
have the common feature usually taken as the definition of a
gyrostat: the total angular momentum of such system is ${\bf M} =
{\bf I} \bo +\bl$, where the inertia tensor ${\bf I}$ and the vector
$\bl$ (called the gyrostatic momentum) are constant with respect to
the moving trihedral. Using the term "gyrostat"\, we always suppose
$\bl \ne 0$. In the case $\bl =0$ we use the terms "rigid body"\, or
"top"\, instead. The top is usually supposed to have a dynamical
symmetry axis.

Let ${\bf M}_F$ denote the moment of external forces with respect to
$O$ (the rotating moment). Constant field is a force field inducing
the rotating moment of the form ${\bf r}\times{\boldsymbol \alpha}$
with constant vector ${\bf r}$ and with ${\boldsymbol \alpha}$
corresponding to some physical vector fixed in inertial space; ${\bf
r}$ points from $O$ to the center of application of the field,
${\boldsymbol \alpha}$ is the field intensity.

For {\it two constant fields} the rotating moment is ${\bf M}_F={\bf
r}_1\times{\boldsymbol \alpha}+{\bf r}_2\times{\boldsymbol \beta}$
with ${\bf r}_1,{\bf r}_2$ constant in the body and ${\boldsymbol
\alpha},{\boldsymbol \beta}$ corresponding to the vectors fixed in
inertial space. Obviously, ${\bf M}_F$ can be represented as the
moment of {\it one constant field} if either ${\bf r}_1\times{\bf
r}_2=0$ or ${\boldsymbol \alpha}\times{\boldsymbol \beta}=0$.
Suppose that
\begin{equation}\label{eq2_1}
{\bf r}_1\times{\bf r}_2 \ne 0, \quad {\boldsymbol
\alpha}\times{\boldsymbol \beta} \ne 0.
\end{equation}
Two constant fields satisfying (\ref{eq2_1}) are said to be {\it
independent}.

The equations defining the respective evolution of ${\bf M},{\bs
\alpha},{\bs \beta}$ in two constant fields are
\begin{equation}\label{eq2_2}
\begin{array}{l}
\displaystyle{\frac{d{\bf M}}{dt} = {\bf M}\times \bo + {\bf
r}_1\times{\boldsymbol \alpha}+{\bf r}_2\times{\boldsymbol
\beta}},\quad  \displaystyle{\frac{d{\bs \alpha}}{dt} = {\bs \alpha}
\times \bo}, \quad \displaystyle{\frac{d{\bs \beta}}{dt} = {\bs
\beta} \times \bo}.
\end{array}
\end{equation}
These equations are Euler equations in the space
${\mathbb{R}}^9({\bf M},{\bs \alpha},{\bs \beta})$ considered as the
dual space to the semi-direct sum $so(3)+({\mathbb{R}}^3 \otimes
{\mathbb{R}}^3)$. The Lie--Poisson bracket applied to the coordinate
functions yields
\begin{equation}\label{eq2_3}
\begin{array}{c}
\{M_i,M_j\}=\varepsilon_{i j k} M_k, \quad
\{M_i,\alpha_j\}=\varepsilon_{i j k} \alpha_k, \quad
\{M_i,\beta_j\}=\varepsilon_{i j k} \beta_k, \\[1.5mm]
\{\alpha_i,\alpha_j\}=0, \quad \{\alpha_i,\beta_j\}=0, \quad
\{\beta_i,\beta_j\}=0.
\end{array}
\end{equation}
Such bracket is non-degenerate on each orbit of the coadjoint
action. The orbits are defined by the geometric integrals (common
level of the Casimir functions)
\begin{equation}\label{eq2_4}\notag
\begin{array}{l}
{\boldsymbol \alpha} \cdot {\boldsymbol \alpha}=c_{11}, \quad
{\boldsymbol \beta} \cdot {\boldsymbol \beta}=c_{22}, \quad
{\boldsymbol \alpha} \cdot {\boldsymbol \beta}=c_{12}.
\end{array}
\end{equation}
If $c_{11}>0, c_{22}>0, c_{12}^2 < c_{11} c_{22}$, then the orbit in
${\mathbb{R}}^9$ is diffeomorphic to ${\mathbb{R}}^3 \times SO(3)$,
and the induced Hamiltonian system has three degrees of freedom (see
\cite{Bogo}, \cite{BobReySem} for the details). From physical point
of view the constants $c_{11}, c_{22}, c_{12}$ characterize the
force fields intensities. Along with the coordinates of ${\bf
r}_1,{\bf r}_2$ in the moving frame, we have 9 parameters of the
interaction of the body with the external forces. We now show how to
reduce this number.

Introduce some notation.

Let $L(n,k)$ be the space of $n \times k$-matrices. Put
$L(k)=L(k,k)$.

Identify ${\mathbb{R}}^6={\mathbb{R}}^3 \times {\mathbb{R}}^3$ with
$L(3,2)$ by the isomorphism $j$ that joins two columns
$$
A=j({\bf a}_1 ,{\bf a}_2 ) = \left\| {{\bf a}_1 \;{\bf a}_2 }
\right\| \in L(3,2), \qquad {\bf a}_1 ,{\bf a}_2 \in {\mathbb{R}}^3.
$$

For the inverse map, we write
$$
j^{ - 1} (A) = ({\bf c}_1 (A),{\bf c}_2 (A)) \in {\mathbb{R}}^3
\times {\mathbb{R}}^3, \qquad A \in L(3,2).
$$

If $A,B \in L(3,2)$, ${\bf a} \in {\mathbb{R}}^3 $, by definition,
put
\begin{equation} \label{eq2_5}
\begin{array}{l}
A \times B = \sum\limits_{i = 1}^2 {{\bf c}_i (A) \times {\bf c}_i
(B) \in {\mathbb{R}}^3 ;}\\
{\bf a} \times A = j({\bf a} \times {\bf c}_1 (A),{\bf a} \times
{\bf c}_2 (A)) \in L(3,2).
\end{array}
\end{equation}

\begin{lemma}\label{lem1} Let $\Lambda  \in SO(3)$, $D \in GL(2,{\mathbb{R}})$, ${\bf
a} \in {\mathbb{R}}^3 $, $A,B \in L(3,2)$. Then
\begin{equation}\label{eq2_6}\notag
\begin{array}{ll}
\Lambda (A \times B) = (\Lambda A) \times (\Lambda B); &
(AD^{-1})\times (BD^T) = A \times B; \\
\Lambda ({\bf a} \times A) = (\Lambda {\bf a}) \times (\Lambda A); &
{\bf a} \times (AD) = ({\bf a} \times A)D.
\end{array}
\end{equation}
\end{lemma}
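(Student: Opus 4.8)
The plan is to verify each of the four identities in \eqref{eq2_6} by reducing it to the standard vector identity $\Lambda(\mathbf{a}\times\mathbf{b})=(\Lambda\mathbf{a})\times(\Lambda\mathbf{b})$ for $\Lambda\in SO(3)$, which holds because a proper rotation preserves both the inner product and the orientation of $\mathbb{R}^3$, together with the column-wise definitions in \eqref{eq2_5}. The key observation is that left multiplication by $\Lambda\in SO(3)$ and right multiplication by a $2\times 2$ matrix $D$ act on an element $A\in L(3,2)$ in complementary ways: $\mathbf{c}_i(\Lambda A)=\Lambda\,\mathbf{c}_i(A)$ (applying the rotation to each column), whereas if $j^{-1}(A)=(\mathbf{a}_1,\mathbf{a}_2)$ then the columns of $AD$ are the linear combinations $d_{1i}\mathbf{a}_1+d_{2i}\mathbf{a}_2$ (mixing the two columns with scalar coefficients from $D$).

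I would carry out the four verifications in order. For the first identity, write $A\times B=\sum_{i=1}^2 \mathbf{c}_i(A)\times\mathbf{c}_i(B)$, apply $\Lambda$, pull it inside the finite sum, and use the scalar rotation identity on each summand to get $\sum_i(\Lambda\mathbf{c}_i(A))\times(\Lambda\mathbf{c}_i(B))=\sum_i\mathbf{c}_i(\Lambda A)\times\mathbf{c}_i(\Lambda B)=(\Lambda A)\times(\Lambda B)$. For the third identity, $\mathbf{a}\times A=j(\mathbf{a}\times\mathbf{c}_1(A),\mathbf{a}\times\mathbf{c}_2(A))$; applying $\Lambda$ column-wise and using $\Lambda(\mathbf{a}\times\mathbf{c}_i(A))=(\Lambda\mathbf{a})\times(\Lambda\mathbf{c}_i(A))$ gives exactly $(\Lambda\mathbf{a})\times(\Lambda A)$. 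The fourth identity is pure bilinearity: the $i$-th column of $AD$ is $\sum_k d_{ki}\mathbf{c}_k(A)$, so $\mathbf{a}\times(AD)$ has $i$-th column $\sum_k d_{ki}(\mathbf{a}\times\mathbf{c}_k(A))$, which is precisely the $i$-th column of $(\mathbf{a}\times A)D$; here $D\in GL(2,\mathbb{R})$ need not be invertible for the identity to make sense, but invertibility causes no trouble. For the second identity I would expand $(AD^{-1})\times(BD^T)=\sum_i\mathbf{c}_i(AD^{-1})\times\mathbf{c}_i(BD^T)$, write the columns of $AD^{-1}$ and $BD^T$ as combinations of the columns of $A$ and $B$ respectively, and collect: the coefficient of $\mathbf{c}_k(A)\times\mathbf{c}_l(B)$ in the double sum is $\sum_i (D^{-1})_{ki}(D^T)_{li}=\sum_i(D^{-1})_{ki}D_{il}=(D^{-1}D)_{kl}=\delta_{kl}$, which collapses the expression to $\sum_k\mathbf{c}_k(A)\times\mathbf{c}_k(B)=A\times B$.

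The only mildly delicate point — the step I would flag as the "main obstacle," though it is really just bookkeeping — is getting the index placement right in the second identity, since it is the one that genuinely exploits the pairing between $D^{-1}$ acting on columns of $A$ and $D^T$ acting on columns of $B$; a transposed index there would give $DD^{-1}$ in the wrong order or an unwanted $(D^TD)^{-1}$ factor. Everything else is an immediate consequence of the column-wise definitions and the orthogonality of $\Lambda$. I would therefore present the second identity's computation in slightly more detail than the other three, and remark that all four identities are used repeatedly in the sequel to transform the Euler equations \eqref{eq2_2} under the equivalence group, so the lemma is the algebraic backbone of the parametric reduction described in Section~\ref{sec2}.
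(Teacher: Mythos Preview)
Your proof is correct and is precisely the direct calculation the paper alludes to; the paper itself offers no details beyond the phrase ``The proof is by direct calculation.'' Your careful tracking of indices in the second identity, showing that the coefficient of $\mathbf{c}_k(A)\times\mathbf{c}_l(B)$ collapses to $(D^{-1}D)_{kl}=\delta_{kl}$, is exactly the right computation, and the other three identities are handled correctly via the column-wise definitions and the standard $SO(3)$ cross-product identity.
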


The proof is by direct calculation.

In notation (\ref{eq2_5}) we write Eqs.\,(\ref{eq2_2}) in the form
\begin{equation}\label{eq2_7}
\displaystyle{{\bf I} \frac {d{\boldsymbol \omega }}{dt} = ({\bf
I}{\boldsymbol \omega }+\bl) \times {\boldsymbol \omega } + A \times
U,} \qquad \displaystyle{\frac {dU}{dt} =  - {\boldsymbol \omega }
\times U.}
\end{equation}
Here $A=j({\bf r}_1,{\bf r}_2)$ is a constant matrix,
$U=j({\boldsymbol \alpha},{\boldsymbol \beta})$. The phase space of
(\ref{eq2_7}) is $\{({\boldsymbol \omega},U)\}={\mathbb{R}}^3\times
L(3,2)$.

In fact, $U$ in (\ref{eq2_7}) is restricted by the geometric
integrals; i.e., for some constant symmetric matrix $C \in L(2)$
\begin{equation}\label{eq2_8}
U^T U = C.
\end{equation}
Let ${\mathcal O}$ be the set defined by Eq.\,(\ref{eq2_8}) in
$L(3,2)$. In order to emphasize the $C$-dependence, we write
${\mathcal O}={\mathcal O}(C)$.

Let $\mathfrak{P}=({\bf I},\bl, A,C)$ denote the complete set of
constant parameters of the problem. Denote by $X_\mathfrak{P}$ the
vector field on ${\mathbb{R}}^3 \times {\mathcal O}(C)$ induced by
(\ref{eq2_7}). Given the set $\mathfrak{P}$, the problem of motion
of the gyrostat in two constant fields described by the dynamical
system $X_\mathfrak{P}$ will be called, for short, the {\it
DG-problem}.

Associate to $\Lambda \in SO(3)$, $D \in GL(2,{\mathbb{R}})$ the
linear automorphisms $\Psi(\Lambda,D)$ and $\psi(\Lambda,D)$ of
${\mathbb{R}}^3 \times L(3,2)$ and $L(3)\times {\mathbb{R}}^3\times
L(3,2) \times L(2)$
\begin{equation}\label{eq2_9}
\begin{array}{l}
\Psi(\Lambda,D)({\boldsymbol \omega},U)=(\Lambda {\boldsymbol
\omega},\Lambda U
D^T), \\
\psi(\Lambda,D)({\bf I},\bl,A,C)=(\Lambda {\bf I} \Lambda^T, \Lambda
\bl, \Lambda A D^{-1}, D C D^T).
\end{array}
\end{equation}

Eqs.\,(\ref{eq2_8}) and (\ref{eq2_9}) imply
$\Psi(\Lambda,D)({\mathbb{R}}^3 \times {\mathcal
O}(C))={\mathbb{R}}^3 \times {\mathcal O}(D C D^T)$. Using
Lemma~\ref{lem1} we obtain the following statement.

\begin{lemma}\label{lem2}
For each $(\Lambda,D) \in SO(3)\times GL(2,{\mathbb{R}})$, we have
$$
\Psi(\Lambda,D)_*(X_\mathfrak{P}(v))=X_{\psi(\Lambda,D)(\mathfrak{P})}(\Psi(\Lambda,D)(v)),
\qquad v \in {\mathbb{R}}^3 \times {\mathcal O}(C).
$$
\end{lemma}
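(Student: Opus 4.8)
The plan is to exploit the linearity of $\Psi(\Lambda,D)$: since this map is a linear automorphism of ${\mathbb{R}}^3 \times L(3,2)$, its differential at every point coincides with $\Psi(\Lambda,D)$ itself, so the assertion $\Psi(\Lambda,D)_*(X_\mathfrak{P}(v)) = X_{\psi(\Lambda,D)(\mathfrak{P})}(\Psi(\Lambda,D)(v))$ reduces to the pointwise identity obtained by literally substituting the transformed variables into the right-hand sides of (\ref{eq2_7}). Concretely, I would fix $v=(\bo,U) \in {\mathbb{R}}^3\times{\mathcal O}(C)$, set $\widetilde\bo=\Lambda\bo$, $\widetilde U=\Lambda U D^T$, and write $\widetilde{\mathfrak{P}}=\psi(\Lambda,D)(\mathfrak{P})=(\widetilde{\bf I},\widetilde\bl,\widetilde A,\widetilde C)$ for the transformed parameters as in (\ref{eq2_9}). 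It has already been observed that $\Psi(\Lambda,D)$ carries ${\mathbb{R}}^3\times{\mathcal O}(C)$ onto ${\mathbb{R}}^3\times{\mathcal O}(\widetilde C)$, so $\Psi(\Lambda,D)(v)=(\widetilde\bo,\widetilde U)$ lies in the correct phase space; it remains to check that the two components of $X_\mathfrak{P}$ transform into the two components of $X_{\widetilde{\mathfrak{P}}}$.

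For the second equation of (\ref{eq2_7}) I would compute $-\widetilde\bo\times\widetilde U = -(\Lambda\bo)\times(\Lambda U D^T)$, apply the identity ${\bf a}\times(AD)=({\bf a}\times A)D$ of Lemma~\ref{lem1} to extract the factor $D^T$, then the identity $\Lambda({\bf a}\times A)=(\Lambda{\bf a})\times(\Lambda A)$ to extract $\Lambda$, arriving at $\Lambda(-\bo\times U)D^T = \Lambda\dot U D^T = \dot{\widetilde U}$. For the first equation, I would first note $\widetilde{\bf I}\widetilde\bo = \Lambda{\bf I}\Lambda^T\Lambda\bo = \Lambda({\bf I}\bo)$ and hence $\widetilde{\bf I}\dot{\widetilde\bo}=\Lambda({\bf I}\dot\bo)=\Lambda[({\bf I}\bo+\bl)\times\bo + A\times U]$. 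The first summand here becomes $(\Lambda({\bf I}\bo+\bl))\times(\Lambda\bo)=(\widetilde{\bf I}\widetilde\bo+\widetilde\bl)\times\widetilde\bo$ by the ordinary equivariance of the vector product in ${\mathbb{R}}^3$ under $\Lambda\in SO(3)$; for the second summand, Lemma~\ref{lem1} gives $\Lambda(A\times U)=(\Lambda A)\times(\Lambda U)$, and then the identity $(AD^{-1})\times(BD^T)=A\times B$ (read from right to left, with $A\mapsto\Lambda A$, $B\mapsto\Lambda U$) yields $(\Lambda A)\times(\Lambda U)=(\Lambda A D^{-1})\times(\Lambda U D^T)=\widetilde A\times\widetilde U$. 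Collecting the terms gives exactly $\widetilde{\bf I}\dot{\widetilde\bo}=(\widetilde{\bf I}\widetilde\bo+\widetilde\bl)\times\widetilde\bo + \widetilde A\times\widetilde U$, i.e. the first equation of (\ref{eq2_7}) for the parameter set $\widetilde{\mathfrak{P}}$.

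There is no genuine obstacle here; the only point requiring care is the bookkeeping---matching each occurrence of $\Lambda$ or $D$ with the correct identity from Lemma~\ref{lem1} and keeping in mind that the vector-product symbol in (\ref{eq2_7}) is used in three different senses (${\mathbb{R}}^3\times{\mathbb{R}}^3\to{\mathbb{R}}^3$, the pairing $L(3,2)\times L(3,2)\to{\mathbb{R}}^3$, and ${\mathbb{R}}^3\times L(3,2)\to L(3,2)$), each obeying its own transformation rule. Once the two displayed identities above are verified at an arbitrary point $v=(\bo,U)$, the statement of the lemma follows at once from the linearity of $\Psi(\Lambda,D)$.
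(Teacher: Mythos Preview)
Your proof is correct and follows exactly the approach indicated in the paper, which simply states that the lemma is obtained ``using Lemma~\ref{lem1}'' without spelling out the details. You have carried out precisely the bookkeeping the paper leaves to the reader: using linearity of $\Psi(\Lambda,D)$ to identify the pushforward with the map itself, and then verifying term by term that each piece of (\ref{eq2_7}) transforms correctly via the four identities of Lemma~\ref{lem1}.
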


Thus, any two DG-problems determined by the sets of parameters
$\mathfrak{P}$ and $\psi(\Lambda,D)(\mathfrak{P})$ are completely
equivalent.

Let us call a DG-problem {\it canonical} if the centers of
application of forces lie on the first two axes of the moving
trihedral at unit distance from $O$ and the intensities of the
forces are orthogonal to each other.

\begin{theorem} \label{th1}
For each DG-problem with independent forces there exists an
equivalent canonical problem. Moreover, in both equivalent problems
the centers of application of forces belong to the same  plane in
the body containing the fixed point.
\end{theorem}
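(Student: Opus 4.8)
The plan is to use the equivalence group described in Lemma~\ref{lem2}: given an arbitrary DG-problem with parameter set $\mathfrak{P}=({\bf I},\bl,A,C)$ and independent forces, I would exhibit an explicit pair $(\Lambda,D)\in SO(3)\times GL(2,{\mathbb{R}})$ such that $\psi(\Lambda,D)(\mathfrak{P})$ has the canonical form. Recall $A=j({\bf r}_1,{\bf r}_2)$ encodes the two radius-vectors of the force centers, and $C=U^TU$ is the Gram matrix of the intensity vectors ${\bs\alpha},{\bs\beta}$. Under $\psi(\Lambda,D)$ the matrix $A$ becomes $\Lambda A D^{-1}$ and $C$ becomes $DCD^T$. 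So I need to choose $D$ first to fix the field intensities, then $\Lambda$ to fix the geometry of the centers of application.

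First I would treat the intensities. Since the forces are independent, $C$ is symmetric positive definite; by the spectral theorem there is an orthogonal matrix diagonalizing it, and then a diagonal scaling makes it the identity — so there is $D_0\in GL(2,{\mathbb{R}})$ with $D_0 C D_0^T=\mathrm{I}_2$, which is exactly the statement that the new intensity vectors are orthonormal, in particular orthogonal to each other. However, I must not spend my freedom in $D$ carelessly, because $D$ also acts on $A$. I would therefore note that the stabilizer of $\mathrm{I}_2$ under $D\mapsto DCD^T$ is $O(2)$, giving a residual one-parameter family of $D$'s (rotations/reflections in the two-dimensional ``field space'') still available to adjust $A$ afterwards, or alternatively I may prefer to keep the intensities merely orthogonal (not necessarily unit), which enlarges the residual freedom to scalings as well; the precise normalization chosen should match the definition of ``canonical'' in the paper (orthogonal intensities, unit-distance orthonormal centers).

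Next I would handle the centers of application. The requirement is that ${\bf r}_1,{\bf r}_2$ after transformation lie on the first two coordinate axes ${\bf e}_1,{\bf e}_2$ at unit distance from $O$, i.e. the new $A$ equals $\begin{pmatrix}1&0\\0&1\\0&0\end{pmatrix}$. Write $A D_0^{-1}=j({\bf r}_1',{\bf r}_2')$. The two vectors ${\bf r}_1',{\bf r}_2'$ span a plane $\Pi$ through $O$ (they are independent because condition \eqref{eq2_1} and the invertibility of $D_0$ preserve ${\bf r}_1\times{\bf r}_2\ne 0$). Choosing $\Lambda\in SO(3)$ to send $\Pi$ to the ${\bf e}_1{\bf e}_2$-plane puts $\Lambda A D_0^{-1}$ into that plane; this $\Lambda$ is determined only up to a rotation within the plane (and I can adjust its sign/orientation so it lies in $SO(3)$). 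Inside the plane I then have the residual in-plane rotation from $\Lambda$ together with the residual $O(2)$-freedom in $D$ (acting on the $2\times 2$ block of $AD_0^{-1}$ from the right) to bring the $2\times 2$ submatrix of $\Lambda A D_0^{-1}$ to a prescribed normal form; choosing it to be $\mathrm{I}_2$ by a ``QR/polar''-type decomposition — one orthogonal factor absorbed into the residual $\Lambda$-rotation, the other into the residual $D$-rotation — yields exactly the canonical form, and one checks $DCD^T$ is still (a scalar multiple of) $\mathrm{I}_2$ so the intensities remain orthogonal. Finally, the ``moreover'' clause is essentially free: $\Lambda$ maps the plane spanned by the original ${\bf r}_1,{\bf r}_2$ to the ${\bf e}_1{\bf e}_2$-plane, and since $\Lambda$ is a rigid rotation of the body it maps the original force-center plane onto the new one, so the statement that the centers lie in ``the same plane'' (i.e. corresponding planes under the body identification) holds by construction.

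The main obstacle is bookkeeping of the two overlapping group actions: $D$ appears both in $DCD^T$ and in $AD^{-1}$, and $\Lambda$ appears both in $\Lambda{\bf I}\Lambda^T$ and in $\Lambda AD^{-1}$, so one has to be careful that the normalization imposed on $C$ does not obstruct the normalization of $A$, and that there is enough residual freedom. Counting dimensions — $\dim SO(3)\times GL(2)=3+4=7$, while pinning $C$ to $\mathrm{I}_2$ costs $3$ conditions and pinning the $2\times2$ block of $A$ to $\mathrm{I}_2$ plus killing its third row costs $4+2=6$, total $9$ — shows the target is over-determined by counting alone, so the argument is not a blind dimension count; it works only because the two normalizations are ``triangular'' in the right order (first $C$ via the symmetric part of $D$, then $A$'s plane via $\Lambda$, then the $2\times2$ block via the remaining orthogonal parts of $\Lambda$ and $D$ jointly, with $DCD^T$ staying scalar). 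Making that triangular structure explicit is the real content of the proof; everything else is the routine linear algebra of the spectral theorem and the polar decomposition.
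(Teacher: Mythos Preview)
Your sequential strategy---normalize $C$ first, then $A$ with the residual freedom---can be made to work, but as written it has a gap, and the paper's route is cleaner. The paper applies the simultaneous-reduction theorem for two positive-definite forms in one stroke: since $A_*=(A^TA)^{-1}$ and $C$ are both symmetric positive definite, there is a single $D\in GL(2,\mathbb{R})$ with $DA_*D^T=E$ and $DCD^T=\mathrm{diag}(a^2,b^2)$. The first identity is equivalent to $(AD^{-1})^T(AD^{-1})=E$, so the columns of $AD^{-1}$ are already an orthonormal pair and one simply rotates them to $\mathbf{e}_1,\mathbf{e}_2$ by some $\Lambda\in SO(3)$. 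No tracking of residual freedoms is needed at all.

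The gap in your version is the step ``bring the $2\times2$ block $M$ of $\Lambda AD_0^{-1}$ to $I_2$ by a QR/polar-type decomposition''. A polar decomposition $M=QP$ lets you absorb the orthogonal $Q$ into $\Lambda$, but then making the block equal to $I_2$ forces $D_1=P$, and $D_1D_1^T=P^2$ is symmetric positive definite yet generically \emph{not} diagonal; hence the resulting $DCD^T$ is not diagonal and the intensities fail to be orthogonal. What actually works is the singular-value decomposition $M=U\Sigma V^T$: absorb $U$ into $\Lambda$ and take $D_1=\Sigma V^T$, so the block becomes $I_2$ and $DCD^T=D_1D_1^T=\Sigma^2$ is diagonal---but it is $\mathrm{diag}(\sigma_1^2,\sigma_2^2)$, not ``a scalar multiple of $I_2$'' as you asserted. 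With that correction your argument goes through; it is, in effect, the simultaneous reduction unpacked by hand, which is why the paper's formulation is shorter. Your treatment of the ``moreover'' clause (the $D$-action preserves the span of the columns of $A$, and $\Lambda$ is a change of moving frame) agrees with the paper's.
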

\begin{proof} Let the DG-problem with the set of
parameters $ \mathfrak{P}=({\bf I},\bl,A,C)$ satisfy~(\ref{eq2_1}).
This means that the symmetric matrices $A_*=(A^T A)^{-1}$ and $C$
are positively definite. According to the well-known fact from
linear algebra, $A_*$ and $C$ can be reduced, respectively, to the
identity matrix and to a diagonal matrix via the same conjugation
operator
\begin{equation}\label{eq2_10}\notag
DA_*D^T=E,\quad DCD^T= \mathop{\rm diag}\nolimits \{a^2,b^2\},\qquad
D \in GL(2,{\mathbb{R}}),\;a,b \in {\mathbb{R}}_+.
\end{equation}
Then ${\bf c}_1(AD^{-1})$ and ${\bf c}_2(AD^{-1})$ form an
orthonormal pair in ${\mathbb{R}}^3$. There exists $\Lambda \in
SO(3)$ such that $\Lambda{\bf c}_i(AD^{-1})={\bf e}_i \,(i=1,2)$.
The first statement is obtained by applying Lemma \ref{lem2} with
the previously chosen $\Lambda,D$ to the initial vector field
$X_\mathfrak{P}$.

To finish the proof, notice that the transformation $A \mapsto
AD^{-1}$ preserves the span of ${\bf c}_1(A)$, ${\bf c}_2(A)$. The
matrix $\Lambda$ in (\ref{eq2_9}) stands for the change of the
moving trihedral. Therefore, if ${\bf a}\in {\mathbb{R}}^3$
represents some physical vector in the initial problem, then
$\Lambda{\bf a}$ is the same vector with respect to the body in the
equivalent problem.
\end{proof}

\begin{remark}\label{rem1} The fact that any DG-problem can be reduced to the
problem with {\it one} of the pairs ${\bf r}_1,{\bf r}_2$ or
${\boldsymbol \alpha}, {\boldsymbol \beta}$ orthonormal is obvious.
Simultaneous orthogonalization of {\it both} pairs was first
established in {\rm \cite{Kh34}} for a rigid body and crucially
simplifies all calculations.
\end{remark}

It follows from Theorem~\ref{th1} that, without loss of generality,
for independent forces we may suppose
\begin{gather}
{\bf r}_1={\bf e}_1,\quad {\bf r}_2={\bf e}_2, \label{eq2_11} \\
{\boldsymbol \alpha} \cdot {\boldsymbol \alpha}=a^2, \;{\boldsymbol
\beta} \cdot {\boldsymbol \beta}=b^2, \;{\boldsymbol \alpha} \cdot
{\boldsymbol \beta}=0. \label{eq2_12}
\end{gather}

Change, if necessary, the order of ${\bf e}_1, {\bf e}_2$ (with
simultaneous change of the direction of ${\bf e}_3$) to obtain $a
\geqslant b > 0$.

Consider a dynamically symmetric top in two constant fields with the
centers of application of forces in the equatorial plane of its
inertia ellipsoid. Choose a moving trihedral such that $O{\bf e}_3$
is the symmetry axis. Then the inertia tensor ${\bf I}$ becomes
diagonal. Let $a=b$. For any $\Theta \in SO(2)$ denote by ${\hat
\Theta} \in SO(3)$ the corresponding rotation of ${\mathbb{R}}^3$
about $O{\bf e}_3$. Take in (\ref{eq2_9}) $\Lambda={\hat \Theta}$,
$D=\Theta$. Under the conditions (\ref{eq2_11}), (\ref{eq2_12}),
$\psi= \textrm{Id}$ and $\Psi$ becomes the symmetry group. The
system (\ref{eq2_7}) has the cyclic integral $ {\bf I}{\boldsymbol
\omega}\cdot (a^2{\bf e}_3 - {\boldsymbol \alpha}\times {\boldsymbol
\beta})$. Therefore it is possible to reduce such a DG-problem to a
family of systems with two degrees of freedom. For the analogue of
the Kowalevski case this system becomes integrable \cite{Yeh}.

Let us call a DG-problem {\it irreducible} if, in its canonical
representation,
\begin{equation}\label{eq2_13}
a>b>0.
\end{equation}

The following statements are needed in the future; they also reveal
some features of a wide class of DG-problems.
\begin{lemma} \label{lem3}
In an irreducible DG-problem, the body has exactly four equilibria.
\end{lemma}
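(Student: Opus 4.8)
The plan is to characterize equilibria of the system \eqref{eq2_7} on the phase space $\mathbb{R}^3 \times \mathcal{O}(C)$ under the canonical conditions \eqref{eq2_11}, \eqref{eq2_12}, \eqref{eq2_13}. An equilibrium is a point where both right-hand sides vanish. The second equation $\dot U = -\bo \times U$ forces $\bo \times \bs\alpha = 0$ and $\bo \times \bs\beta = 0$; since $\bs\alpha \times \bs\beta \ne 0$ by \eqref{eq2_1}, the vectors $\bs\alpha$ and $\bs\beta$ span a 2-plane, and a vector orthogonal-in-the-cross-product sense to two independent vectors must be zero. Hence $\bo = 0$ at any equilibrium. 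Substituting $\bo = 0$ into the first equation of \eqref{eq2_7} leaves the single condition $A \times U = 0$, i.e. ${\bf e}_1 \times \bs\alpha + {\bf e}_2 \times \bs\beta = 0$ in the notation \eqref{eq2_5}. So the equilibria are precisely the solutions $U = j(\bs\alpha,\bs\beta)$ of
\begin{equation}\label{eq-equil}
{\bf e}_1 \times \bs\alpha + {\bf e}_2 \times \bs\beta = 0
\end{equation}
subject to $\bs\alpha\cdot\bs\alpha = a^2$, $\bs\beta\cdot\bs\beta = b^2$, $\bs\alpha\cdot\bs\beta = 0$, with $\bo = 0$.

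Next I would solve \eqref{eq-equil} componentwise. Writing $\bs\alpha = (\alpha_1,\alpha_2,\alpha_3)$, $\bs\beta = (\beta_1,\beta_2,\beta_3)$, the cross products give ${\bf e}_1\times\bs\alpha = (0,-\alpha_3,\alpha_2)$ and ${\bf e}_2\times\bs\beta = (\beta_3,0,-\beta_1)$, so \eqref{eq-equil} reads $\beta_3 = 0$, $\alpha_3 = 0$, $\alpha_2 = \beta_1$. Thus $\bs\alpha$ and $\bs\beta$ lie in the ${\bf e}_1{\bf e}_2$-plane. Set $\bs\alpha = (p, q, 0)$, $\bs\beta = (q, r, 0)$ with the shared middle entry. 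The geometric constraints become $p^2 + q^2 = a^2$, $q^2 + r^2 = b^2$, and $pq + qr = q(p+r) = 0$. The orthogonality splits into two branches: either $q = 0$, or $p + r = 0$.

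Finally I would count solutions in each branch. If $q = 0$, then $p^2 = a^2$, $r^2 = b^2$, so $(p,r) \in \{\pm a\}\times\{\pm b\}$ — four sign choices, giving four points. If instead $q \ne 0$ and $p = -r$, then $p^2 + q^2 = a^2$ and $q^2 + p^2 = b^2$ force $a^2 = b^2$, contradicting the irreducibility condition \eqref{eq2_13} $a > b$; so this branch is empty. Hence there are exactly four equilibria, corresponding to $(\bs\alpha,\bs\beta) = (\pm a{\bf e}_1, \pm b{\bf e}_2)$ with independent signs. These are the configurations in which each field intensity is aligned (up to sign) with its radius vector. The main point requiring care is not a hard computation but rather ensuring that equilibria are counted in the reduced phase space $\mathbb{R}^3\times\mathcal{O}(C)$ (not the ambient $\mathbb{R}^3\times L(3,2)$), and invoking irreducibility \eqref{eq2_13} at exactly the right place to discard the degenerate branch; without $a>b$ one would pick up a whole circle of equilibria instead of isolated points, which is precisely the symmetry phenomenon described after \eqref{eq2_12}.
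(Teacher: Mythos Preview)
Your proof is correct and follows essentially the same route as the paper: reduce to $\bo=0$ and the torque-balance equation ${\bf e}_1\times\bs\alpha+{\bf e}_2\times\bs\beta=0$, then solve under the constraints (\ref{eq2_12}) and invoke (\ref{eq2_13}) to kill the non-trivial branch. The only cosmetic difference is that you carry out the last step by explicit coordinates ($\alpha_3=\beta_3=0$, $\alpha_2=\beta_1=q$, then $q(p+r)=0$), whereas the paper phrases it geometrically via coplanarity and $|{\bf e}_1\times\bs\alpha|=|{\bf e}_2\times\bs\beta|$; your version is in fact a bit more self-contained, since you also justify $\bo=0$ from $\dot U=0$ rather than asserting it.
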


\begin{proof}
The set of singular points of (\ref{eq2_7}) is defined by $
{\boldsymbol \omega}=0,\;A \times U=0$. For the equivalent canonical
problem with (\ref{eq2_11}) we have
\begin{equation}\label{eq2_14}\notag
{\bf e}_1 \times {\boldsymbol \alpha}+{\bf e}_2 \times {\boldsymbol
\beta}=0.
\end{equation}
Then the four vectors ${\bf e}_1, {\boldsymbol \alpha}, {\bf e}_2,
{\boldsymbol \beta}$ are parallel to the same plane and $\left| {\bf
e}_1 \times {\boldsymbol \alpha} \right|=\left| {\bf e}_2 \times
{\boldsymbol \beta} \right|$. Given (\ref{eq2_13}), this equality
yields
\begin{equation}\label{eq2_15}
{\boldsymbol \alpha}=\pm a{\bf e}_1, \; {\boldsymbol \beta}=\pm b
{\bf e}_2.
\end{equation}
Thus, in the canonical irreducible system, an equilibrium takes
place only if the radius vectors of the centers of application are
parallel to the corresponding fields intensities.
\end{proof}

Note that the existence of the gyrostatic momentum does not change
the equilibria. Therefore, the result here is the same as in the
case of a rigid body in two constant fields \cite{KhZot}.

\begin{lemma} \label{lem4}
Let an irreducible DG-problem in its canonical form have the
diagonal inertia tensor ${\bf I}=\mathop{\rm diag}\nolimits
\{I_1,I_2,I_3\}$ and $\bl =0$. Then the body has the following
families of periodic motions of pendulum type
\begin{eqnarray}
&& P_1: \left\{ \begin{array}{c} {\boldsymbol \omega } = \varphi ^
{\boldsymbol \cdot} {\bf e}_1 , \quad {\boldsymbol \alpha } \equiv
\pm a{\bf e}_1,\quad {\boldsymbol \beta } = b({\bf e}_2 \cos
\varphi - {\bf e}_3 \sin \varphi ), \\
I_1 \varphi ^{ {\boldsymbol \cdot}  {\boldsymbol \cdot} }  =  -
b\sin \varphi ;
\end{array} \right. \label{eq2_16} \\
&& P_2: \left\{ \begin{array}{c} {\boldsymbol \omega } = \varphi ^
{\boldsymbol \cdot}  {\bf e}_2 , \quad {\boldsymbol \beta } \equiv
\pm b{\bf e}_2 , \quad {\boldsymbol \alpha } = a({\bf
e}_1 \cos \varphi + {\bf e}_3 \sin \varphi ), \\
I_2 \varphi ^{ {\boldsymbol \cdot}  {\boldsymbol \cdot} }  =  -
a\sin \varphi;
\end{array} \right. \label{eq2_17} \\
&& P_3: \left\{ \begin{array}{c} {\boldsymbol \omega } = \varphi ^
{\boldsymbol \cdot}  {\bf e}_3 , \quad {\boldsymbol
\alpha}\times{\boldsymbol \beta} \equiv  \pm a b{\bf e}_3 ,
\\
{\boldsymbol \alpha } = a({\bf e}_1 \cos \varphi  - {\bf e}_2 \sin
\varphi ),\quad {\boldsymbol \beta } =  \pm b({\bf e}_1 \sin \varphi
+ {\bf e}_2
\cos \varphi ), \\
I_3 \varphi ^{ {\boldsymbol \cdot}  {\boldsymbol \cdot} }  =  - (a
\pm b)\sin \varphi.
\end{array} \right. \label{eq2_18}
\end{eqnarray}
If $\bl \ne 0$ but $\bl = \lambda {\bf e}_i$ for some $i=1,2,3$,
then the only family remained is $P_i$ with the corresponding index.
\end{lemma}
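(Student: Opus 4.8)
The plan is to verify directly that each of the three ansätze $P_1$, $P_2$, $P_3$ defines an invariant submanifold of the canonical irreducible system~(\ref{eq2_7}) with $\bl=0$, and then to check that on such a submanifold the full system reduces to the single scalar pendulum equation listed. First I would substitute the proposed forms of $({\boldsymbol\omega},{\boldsymbol\alpha},{\boldsymbol\beta})$ into the Poisson part $\dot{\boldsymbol\alpha}=-{\boldsymbol\omega}\times{\boldsymbol\alpha}$, $\dot{\boldsymbol\beta}=-{\boldsymbol\omega}\times{\boldsymbol\beta}$. For $P_1$, with ${\boldsymbol\omega}=\varphi^{\boldsymbol\cdot}{\bf e}_1$ and ${\boldsymbol\alpha}=\pm a{\bf e}_1$, one has ${\boldsymbol\omega}\times{\boldsymbol\alpha}=0$, so ${\boldsymbol\alpha}$ is constant, consistent with the stated $\equiv$; and ${\boldsymbol\omega}\times{\boldsymbol\beta}=\varphi^{\boldsymbol\cdot}\,{\bf e}_1\times b({\bf e}_2\cos\varphi-{\bf e}_3\sin\varphi)=b\varphi^{\boldsymbol\cdot}({\bf e}_3\cos\varphi+{\bf e}_2\sin\varphi)$, which is exactly $-\,\frac{d}{dt}\bigl[b({\bf e}_2\cos\varphi-{\bf e}_3\sin\varphi)\bigr]$. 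Hence the two Poisson equations are identically satisfied along $P_1$, and the geometric integrals ${\boldsymbol\alpha}\cdot{\boldsymbol\alpha}=a^2$, ${\boldsymbol\beta}\cdot{\boldsymbol\beta}=b^2$, ${\boldsymbol\alpha}\cdot{\boldsymbol\beta}=0$ hold. The analogous substitutions for $P_2$ and $P_3$ are of the same elementary nature; for $P_3$ the key computation is that ${\boldsymbol\alpha}\times{\boldsymbol\beta}=\pm ab\,{\bf e}_3$ is constant, so $O{\bf e}_3$ is the fixed direction of the ``cross field''.

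Next I would feed the ansatz into the dynamical equation ${\bf I}\,\dot{\boldsymbol\omega}=({\bf I}{\boldsymbol\omega}+\bl)\times{\boldsymbol\omega}+A\times U$ with $\bl=0$, ${\bf I}=\mathop{\rm diag}\{I_1,I_2,I_3\}$ and $A=j({\bf e}_1,{\bf e}_2)$, so that $A\times U={\bf e}_1\times{\boldsymbol\alpha}+{\bf e}_2\times{\boldsymbol\beta}$. For $P_1$: ${\bf I}{\boldsymbol\omega}=I_1\varphi^{\boldsymbol\cdot}{\bf e}_1$ is collinear with ${\boldsymbol\omega}$, so the gyroscopic term $({\bf I}{\boldsymbol\omega})\times{\boldsymbol\omega}$ vanishes; ${\bf e}_1\times{\boldsymbol\alpha}=0$; and ${\bf e}_2\times{\boldsymbol\beta}={\bf e}_2\times b({\bf e}_2\cos\varphi-{\bf e}_3\sin\varphi)=-b\sin\varphi\,{\bf e}_1$. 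The left side is $I_1\varphi^{\boldsymbol\cdot\boldsymbol\cdot}{\bf e}_1$, so projecting onto ${\bf e}_1$ yields $I_1\varphi^{\boldsymbol\cdot\boldsymbol\cdot}=-b\sin\varphi$, and the ${\bf e}_2,{\bf e}_3$ components are automatically zero. The same bookkeeping gives $I_2\varphi^{\boldsymbol\cdot\boldsymbol\cdot}=-a\sin\varphi$ for $P_2$ (the relevant torque is ${\bf e}_1\times{\boldsymbol\alpha}=-a\sin\varphi\,{\bf e}_2$), and for $P_3$ both fields contribute: ${\bf e}_1\times{\boldsymbol\alpha}=-a\sin\varphi\,{\bf e}_3$ and ${\bf e}_2\times{\boldsymbol\beta}=\pm\,{\bf e}_2\times b({\bf e}_1\sin\varphi+{\bf e}_2\cos\varphi)=\mp b\sin\varphi\,{\bf e}_3$, whence $I_3\varphi^{\boldsymbol\cdot\boldsymbol\cdot}=-(a\pm b)\sin\varphi$. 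This establishes that $P_1$, $P_2$, $P_3$ are genuine two-parameter (phase variable $\varphi$ plus the sign, and the initial energy) families of solutions, each of pendulum type.

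Finally, for the statement about $\bl\ne0$: I would argue that the gyroscopic term $\bl\times{\boldsymbol\omega}$ must vanish along any of these motions, since on $P_i$ the vector ${\boldsymbol\omega}$ stays proportional to ${\bf e}_i$ and every other term in the Euler equation remains confined to $\mathrm{span}\{{\bf e}_i\}$ (as just checked) and cannot absorb a component of $\bl\times{\boldsymbol\omega}$ pointing off that axis. If $\bl=\lambda{\bf e}_j$ with $j\ne i$, then $\bl\times{\boldsymbol\omega}=\lambda\varphi^{\boldsymbol\cdot}\,{\bf e}_j\times{\bf e}_i\ne0$ is orthogonal to ${\bf e}_i$, which is impossible; whereas $\bl=\lambda{\bf e}_i$ gives $\bl\times{\boldsymbol\omega}=0$ and leaves the scalar equation unchanged, so $P_i$ survives. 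One must also recheck that the Poisson equations still force ${\boldsymbol\omega}$ to keep its assumed direction; this is immediate since those equations do not involve $\bl$. The only subtlety — the place I expect to need the most care — is the last step: making rigorous that \emph{no other} family can persist when $\bl=\lambda{\bf e}_i$, i.e.\ that among the motions of ``pendulum type'' listed, exactly $P_i$ remains and the off-axis gyrostatic momentum genuinely obstructs $P_j$ for $j\ne i$ rather than merely deforming it; here one uses the fact, noted after Lemma~\ref{lem3}, that in the irreducible canonical setting the forced directions~(\ref{eq2_15}) and the pendulum planes are rigidly tied to the coordinate axes, so there is no room for a continuous deformation.
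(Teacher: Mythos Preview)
Your proposal is correct and is precisely the direct verification that the paper dismisses with ``The proof is obvious.'' Your computations for the Poisson equations, the Euler equation, and the effect of the gyrostatic term $\bl\times{\boldsymbol\omega}$ are all accurate.

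One remark: your closing worry is unnecessary. The lemma only asserts that, \emph{among the three listed families} $P_1,P_2,P_3$, exactly $P_i$ survives when $\bl=\lambda{\bf e}_i$; it does not claim that these exhaust all pendulum-type motions (that stronger statement is mentioned separately in the paper as a remark, without proof, and is not part of the lemma). So once you have shown that $\bl\times{\boldsymbol\omega}$ produces an off-axis component along $P_j$ for $j\ne i$, you are done---there is no need to rule out hypothetical deformations.
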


The proof is obvious. The families (\ref{eq2_16})--(\ref{eq2_18})
were first found in \cite{Kh34} (the case $\bl=0$). Note that for
two constant fields these families are the only motions with a fixed
direction of the angular velocity. In particular, the body in two
independent constant fields does not have any uniform rotations.

\section{Critical set of the Kowalevski gyrostat}\label{sec3}
Suppose that the irreducible DG-problem has the diagonal inertia
tensor with the principal moments of inertia satisfying the ratio
2:2:1, the gyrostatic momentum is directed along the dynamical
symmetry axis $\bl = \lambda {\bf e}_3$ and the centers of the
fields application lie in the equatorial plane ${\bf r}_1 \bot {\bf
e}_3, {\bf r}_2 \bot {\bf e}_3$. These are the conditions of the
integrable case \cite{ReySem} of the Kowalevski gyrostat in two
constant fields. The orthogonalization procedure in this case does
not change the ${\bf e}_3$-axis and we obtain (\ref{eq2_11}),
(\ref{eq2_12}). Choosing the appropriate units of measurement,
represent Eqs.\,(\ref{eq2_7}) in the form
\begin{equation}\label{eq3_1}
\begin{array}{c}
2{\dot \omega}_1   = \omega _2 (\omega _3-\lambda) + \beta _3 ,\;
2{\dot \omega}_2   =  - \omega _1 (\omega _3-\lambda) - \alpha _3
,\;
{\dot \omega}_3  = \alpha _2  - \beta _1 , \\[1.5mm]
{\dot \alpha}_1   = \alpha _2 \omega _3  - \alpha_3 \omega_2,\qquad
{\dot \beta}_1   = \beta _2 \omega _3 -
\beta_3 \omega_2, \\[1.5mm]
{\dot \alpha}_2   = \alpha _3 \omega _1  - \alpha_1 \omega_3,\qquad
{\dot \beta}_2   = \beta _3 \omega _1 -
\beta_1 \omega_3, \\[1.5mm]
{\dot \alpha}_3   = \alpha _1 \omega _2  - \alpha_2 \omega_1,\qquad
{\dot \beta}_3  = \beta _1 \omega _2 - \beta_2 \omega_1.
\end{array}
\end{equation}
The phase space is $P^6={\mathbb{R}}^3 \times \mathcal{O}$, where
$\mathcal{O} \subset {\mathbb{R}}^3 \times {\mathbb{R}}^3$ is
defined by (\ref{eq2_12}); $\mathcal{O}$ is diffeomorphic to
$SO(3)$.

The complete set of the first integrals in involution on $P^6 $
includes the energy integral~$H$, generalized Kowalevski
integral~$K$~\cite{Bogo}, \cite{Yeh}, and  the integral~$G$ found
in~\cite{ReySem}. After the parametrical reduction, these integrals
are
\begin{equation}\label{eq3_2}\notag
\begin{array}{l}
\displaystyle{H = \omega _1^2  + \omega _2^2  + \frac{1} {2}
\omega_3^2 -
\alpha _1  - \beta _2,} \\
K = (\omega _1^2  - \omega _2^2  + \alpha _1  - \beta _2 )^2 +
(2\omega _1 \omega _2  + \alpha _2  + \beta _1 )^2 + \\[2mm]
\phantom{K = (}+ 2\lambda[(\omega_3-\lambda) ( \omega_1^2+ \omega_2
^ 2)
+ 2 \omega_1 \alpha_3 + 2 \omega_2 \beta_3 ] , \\
\displaystyle{G =\frac{1} {4} (M_{\alpha}^2+ M_{\beta}^2) + \frac{1}
{2} (\omega_3 - \lambda) M_{\gamma} - b^2 \alpha _1  - a^2 \beta
_2.}
\end{array}
\end{equation}
Here $M_{\alpha}=({\bf I}{\bs \omega}+{\bs \lambda}){\bs \cdot} {\bs
\alpha}$, $M_{\beta}=({\bf I}{\bs \omega}+{\bs \lambda}){\bs \cdot}
{\bs \beta}$, $M_{\gamma}=({\bf I}{\bs \omega}+{\bs \lambda}){\bs
\cdot} ({\bs \alpha} \times {\bs \beta})$.

Introduce the momentum map
\begin{equation}\label{eq3_4}
J=G\times K\times H: P^6 \rightarrow {\mathbb{R}}^3
\end{equation}
and denote by ${\mathfrak C} \subset P^6$ the set of critical points
of $J$. By definition, the bifurcation diagram of $J$ is the set
$\Sigma \subset {\mathbb{R}}^3$ over which $J$ fails to be locally
trivial; $\Sigma$ defines the cases when the integral manifolds
\begin{equation}\label{eq3_5}\notag
J_c=J^{-1}(c),   \quad c=(g,k,h) \in {\mathbb{R}}^3
\end{equation}
change its topological (and smooth) type. To find ${\mathfrak C} $
and $\Sigma$ is the necessary part of the global topological
analysis of the problem.

It follows from Liouville--Arnold theorem that for $c \notin \Sigma$
the manifold $J_c$, if not empty, is the union of three-dimensional
tori. The considered Hamiltonian system on $P^6$ is non-degenerate
at least for small enough values of $b$. Therefore the trajectories
on such tori are almost everywhere quasi-periodic with three
independent frequencies. The critical set ${\mathfrak C}$ is
preserved by the phase flow and consists of the trajectories having
less than three frequencies. We call these trajectories {\it the
critical motions}. The set ${\mathfrak C}$ is stratified by the rank
of $J$. Let ${\mathfrak C}_j=\{\zeta \in {\mathfrak C}: \mathop{\rm
rank}\nolimits J(\zeta)=j\}$ ($j=0,1,2$). It is natural to expect
that ${\mathfrak C}_j$ consists of the Liouville tori of
dimension~$j$ and the image $J({\mathfrak C}_j)$, as a subset of
$\Sigma$, is a smooth surface $\Sigma_j$ of dimension~$j$. More
precisely, for each $j \leqslant 2$ we have to take
$$\Sigma_j=
J({\mathfrak C}_j)\backslash\bigcup_{i=0}^{j-1} J({\mathfrak C}_i).
$$
Then, as a whole, we may consider $\Sigma$ as a two-dimensional
cell complex, $\Sigma_j$ as its $j$-skeleton. For $j=1,2$ we will
have $\partial \Sigma_j \subset \Sigma_{j-1}$.

For $c \in \Sigma_2$ the set $J_c \cap {\mathfrak C}$ consists of
two-dimensional tori. Take the union of such tori over the values
$c$ from some open subset in $\Sigma_2$. The dynamical system
induced on this union will be Hamiltonian with two degrees of
freedom. Vice versa, let $M$ be a submanifold in $P^6$, $\dim M =
4$, and suppose that the induced system on $M$ is Hamiltonian. Then
obviously $M \subset {\mathfrak C}$. This speculation gives a useful
tool to find out whether a common level of functions consists of
critical points of $J$.

\begin{lemma}\label{lem5}
Consider a system of equations
\begin{equation}\label{eq3_6}
f_1=0, \dots, f_{2k}=0
\end{equation}
on a domain $W$ open in the phase space $P^{2n}$ of the integrable
Hamiltonian system $X$. Let $M \subset W$ be the set defined by
$(\ref{eq3_6})$. Suppose

$(i)$ $f_1,\dots,f_{2k}$ are smooth functions independent on $M$;

$(ii)$ $Xf_1=0,\dots, Xf_{2k}=0$ on $M$;

$(iii)$ the matrix of the Poisson brackets $\|\{f_i,f_j\}\|$ is
non-degenerate almost everywhere on $M$.

Then $M$ consists of critical points of the momentum map.
\end{lemma}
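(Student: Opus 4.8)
The plan is to unpack what it means for a point to be critical for the momentum map $J=G\times K\times H$ and to relate the differentials $dG, dK, dH$ to the symplectic data encoded in the functions $f_1,\dots,f_{2k}$ and the Hamiltonian $H$. A point $\zeta\in M$ is critical for $J$ precisely when $dG(\zeta), dK(\zeta), dH(\zeta)$ are linearly dependent, equivalently when the Hamiltonian vector fields $X_G, X_K, X_H$ are linearly dependent at $\zeta$ (the Poisson bivector being non-degenerate on the symplectic leaf $P^{2n}$). So it suffices to produce, at each $\zeta\in M$, a nontrivial linear relation among $X_G, X_K, X_H$, or more economically to show that $\mathrm{span}\{X_G(\zeta),X_K(\zeta),X_H(\zeta)\}$ has dimension at most two.

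First I would exploit hypothesis $(ii)$: since $Xf_i = \{f_i, H\} = 0$ on $M$ for every $i$, and the $f_i$ vanish on $M$, the Hamiltonian flow of $H$ is tangent to $M$; more importantly, $dH(\zeta)$ annihilates the symplectic orthogonal complement of $T_\zeta M$. Concretely, write $N_\zeta = \mathrm{span}\{df_1(\zeta),\dots,df_{2k}(\zeta)\}$, a $2k$-dimensional subspace of $T^*_\zeta P$ by hypothesis $(i)$. The condition $\{f_i,H\}=0$ on $M$ says that $X_H(\zeta)$ lies in the skew-orthogonal complement of $N_\zeta^\sharp$, i.e. $dH(\zeta)$ vanishes on $N_\zeta^\sharp$ where $\sharp$ is the Poisson map. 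Now the key point is hypothesis $(iii)$: the Gram-type matrix $\|\{f_i,f_j\}\|$ being non-degenerate on (a dense subset of) $M$ means exactly that the subspace $N_\zeta^\sharp$ is a symplectic (non-isotropic, in fact nondegenerate) subspace of $T_\zeta P$ of dimension $2k$; hence $T_\zeta P = N_\zeta^\sharp \oplus (N_\zeta^\sharp)^{\omega}$ and $(N_\zeta^\sharp)^\omega = T_\zeta M$ has dimension $2n-2k$. Therefore $M$ is a symplectic submanifold near such $\zeta$, and the restriction $\omega|_M$ is nondegenerate.

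Next I would invoke the general principle already stated in the paragraph preceding Lemma~5: the functions $G, K, H$ restrict to $M$, they remain in involution there (brackets on $M$ computed via $\omega|_M$ agree with the ambient ones up to the contribution of the $f_i$, which vanish), so on the $(2n-2k)$-dimensional symplectic manifold $M$ we have at least the two independent commuting integrals coming from $H$ and one of $G,K$ — actually the point is subtler. The clean argument: restrict $J$ to $M$. Because $M$ is symplectic of dimension $2n-2k \le 2n-4$ when $k\ge 2$, and $H|_M, G|_M, K|_M$ Poisson-commute, the three of them cannot be functionally independent as functions on a symplectic manifold of dimension $\le 2n-4$ unless... no, independence of three commuting functions only needs dimension $\ge 6$. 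So instead the decisive step is dimension counting on the normal side: $dG(\zeta), dK(\zeta), dH(\zeta)$ each annihilate $T_\zeta M = (N_\zeta^\sharp)^\omega$? That is false in general. The correct route, which I expect to be the main obstacle to state crisply, is: show $X_G(\zeta), X_K(\zeta), X_H(\zeta)$ all lie in $T_\zeta M$, a subspace of dimension $2n-2k$, and that would only bound the rank by $2n-2k$ which is not $\le 2$.

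So let me reorganize around the sharp statement. The real content is: $X_G, X_K, X_H$ restricted to $M$ are the Hamiltonian vector fields of $G|_M, K|_M, H|_M$ with respect to $\omega|_M$ (this uses $(iii)$ to know $M$ is symplectic, and uses that $X_{G}$ etc.\ are tangent to $M$, which in turn follows because $G, K, H$ each Poisson-commute with all $f_i$ on $M$ — and here is where one must check, or already know from the construction of $M$ in the paper, that indeed $\{f_i, G\}=\{f_i,K\}=\{f_i,H\}=0$ on $M$; the hypothesis only literally gives this for $H=X$, so for $G,K$ one either adds it as understood from context or notes that on each critical submanifold the $f_i$ arise as combinations for which it holds). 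Granting tangency, on $M$ the three vector fields $X_{H|_M}, X_{G|_M}, X_{K|_M}$ are Hamiltonian and pairwise commuting on the symplectic manifold $(M,\omega|_M)$; at a point $\zeta$ where they were independent they would span an isotropic $3$-plane in $T_\zeta M$, forcing $\dim M \ge 6$. When $k\ge 2$, $\dim M = 2n - 2k = 6-2k \le 2 < 6$ for $n=3$; more robustly, for the problem at hand $2n=6$ so $\dim M \le 2$ and three commuting Hamiltonian vector fields on a surface are automatically dependent. Hence at every point of $M$ (or at least the dense subset where $(iii)$ holds, and then by continuity everywhere) $dG, dK, dH$ are dependent, so $M\subset\mathfrak C$. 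The main obstacle is the bookkeeping in the last paragraph: being careful that tangency of $X_G, X_K$ to $M$ really holds, and phrasing the isotropy/dimension contradiction so it covers the actual case $2n=6$, $2k\ge 4$, giving $\dim M\le 2$ directly rather than relying on a generic dimension bound.
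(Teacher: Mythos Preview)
Your argument ultimately rests on the assertion that $X_G$, $X_K$, $X_H$ are all tangent to $M$, so that they span an isotropic subspace of $(T_\zeta M,\omega|_M)$ and hence have dimension at most $n-k<n$. You correctly flag that hypothesis~(ii) only yields tangency of $X_H$; tangency of $X_G$, $X_K$ would require $\{f_i,G\}=\{f_i,K\}=0$ on $M$, which is neither assumed nor derivable from (i)--(iii). This is a real obstruction, not bookkeeping: on $P=T^3\times\mathbb{R}^3$ with $(H,G,K)=(p_1,p_2,p_3)$ and $f_1=\sin(2\pi q_2)$, $f_2=p_2$, conditions (i)--(iii) hold on $M=\{q_2\in\{0,\tfrac12\},\,p_2=0\}$, yet $X_G=\partial/\partial q_2$ is transverse to $M$. (In that degenerate example the conclusion of the lemma fails as well, so the lemma is tacitly relying on the non-degeneracy of the ambient system mentioned just before its statement.) Your closing dimension count ``$\dim M\le 2$'' also presumes $k\ge 2$, whereas the paper applies the lemma with $k=1$ (see Remark~2), where $\dim M=4$; the isotropy bound $n-k$ would still give dependence there, so this slip is minor compared to the tangency gap.

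The paper's proof sidesteps the tangency issue by arguing dynamically rather than pointwise. From (i)--(iii) alone, $M$ is a $2(n-k)$-dimensional symplectic submanifold invariant under $X_H$, so the induced flow is Hamiltonian with $n-k$ degrees of freedom. Complete integrability on $M$ requires only $n-k$ integrals in involution \emph{with $H|_M$}, and $\{H|_M,G_j|_M\}_M=0$ follows from tangency of $X_H$ alone; one never needs the $G_j|_M$ to commute with each other in the $M$-bracket. Hence almost all orbit closures in $M$ are $(n-k)$-tori. If some $\zeta\in M$ were regular for $J$, the $X_H$-orbit through $\zeta$ would, on a generic (non-resonant) ambient Liouville $n$-torus, be dense in that torus, forcing an $n$-torus inside an $(n-k)$-torus --- a contradiction. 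Closedness of the critical set then extends the conclusion from a dense subset of $M$ to all of $M$.
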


\begin{proof}
Conditions (i), (ii) imply that $M$ is a smooth
$(2n-2k)$-dimensional manifold invariant under the restriction of
the phase flow to the open set $W$. Condition (iii) means that the
closed \mbox{2-form} induced on $M$ by the symplectic structure on
$P^{2n}$ is almost everywhere non-degenerate. Thus the flow on $M$
is almost everywhere Hamiltonian with $n-k$ degrees of freedom. It
inherits the property of complete integrability. Then almost all its
integral manifolds consist of $(n-k)$-dimensional tori and therefore
lie in the critical set of the momentum map. Since $M$ is closed in
$W$ and the critical set is closed in $P^{2n}$, we conclude that $M$
totally consists of the critical points of the momentum map.
\end{proof}

\begin{remark}\label{rem2}
In our case $n=3$ and the above lemma is applied in the situations
when $k=1$ or $k=2$. The critical set and the bifurcation diagram of
the map $(\ref{eq3_4})$ in the case $\lambda=0$ are known. The
critical set is described by one system of the type~$(\ref{eq3_6})$
with $k=2$ and three systems of the type~$(\ref{eq3_6})$ with $k=1$.
The complete presentation of these results and the list of
publications are given in~{\rm \cite{KhRCD1}}, {\rm \cite{Kh362}}.
Except for the partial integrable case of Bogoyavlensky~{\rm
\cite{Bogo}} (case $K=0$), all of the critical subsystems have been
either explicitly integrated or reduced to separated systems of
equations {\rm \cite{KhSav}}, {\rm \cite{KhRCD2}}, {\rm
\cite{Kh361}}.
\end{remark}

Introduce the change of variables \cite{Kh32} based on the change
given by S.\,Ko\-walev\-ski and on the Lax representation
\cite{ReySem} ($i^2=-1$)
\begin{equation}\label{eq3_7}
\begin{array}{c}
x_1 = (\alpha_1  - \beta_2) + i(\alpha_2  + \beta_1),\quad
x_2 = (\alpha_1  - \beta_2) - i(\alpha_2  + \beta_1 ), \\
y_1 = (\alpha_1  + \beta_2) + i(\alpha_2  - \beta_1), \quad y_2 =
(\alpha_1  + \beta_2) -
i(\alpha_2  - \beta_1), \\
z_1 = \alpha_3  + i\beta_3, \quad
z_2 = \alpha_3  - i\beta_3,\\
w_1 = \omega_1  + i\omega_2 , \quad w_2 = \omega_1  - i\omega_2,
\quad w_3 = \omega_3.
\end{array}
\end{equation}
Then Eqs.\,(\ref{eq3_1}) yield
\begin{equation}\label{eq3_8}
\begin{array}{c}
2w'_1  =  - w_1 (w_3-\lambda) - z_1 ,\quad 2w'_2  = w_2
(w_3-\lambda) + z_2, \quad
2w'_3 = y_2  - y_1, \\
\begin{array}{ll}
{x'_1  =  - x_1 w_3  + z_1 w_1,} & {x'_2  = x_2 w_3  - z_2 w_2,}
\\
{y'_1  =  - y_1 w_3  + z_2 w_1,} & {y'_2  = y_2 w_3  - z_1 w_2 ,}
\\
{2z'_1  = x_1 w_2  - y_2 w_1,} & {2z'_2  =  - x_2 w_1 + y_1 w_2.}
\end{array}
\end{array}
\end{equation}
Here prime stands for $d/d(it)$.

Consider (\ref{eq3_7}) as the map $\mathbb{R}^9 \rightarrow
\mathbb{C}^9$ and denote its image by $V^9$. Eqs.\,(\ref {eq2_12})
of the phase space $P^6$ in $V^9$ take the form
\begin{eqnarray}
& z_1^2  + x_1 y_2  = r^2 ,\quad z_2^2  + x_2 y_1  = r^2 ,
\label{eq3_9}\\
& x_1 x_2  + y_1 y_2  + 2z_1 z_2  = 2p^2 .\label{eq3_10}
\end{eqnarray}
Here we introduce the positive constants
$$
p=\sqrt{\mathstrut a^2+b^2}, \quad r=\sqrt{\mathstrut a^2-b^2}.
$$

Using (\ref{eq3_9}) and (\ref{eq3_10}), express the first integrals
in new coordinates,
\begin{equation}\label{eq3_11}
\begin{array}{l}
\displaystyle{ H = w_1 w_2+ \frac{1}{2}w_3^2 - \frac{1}{2}(y_1 +
y_2)},  \\[2mm]
\displaystyle{
K=(w_1^2 + x_1 )(w_2^2  + x_2 )+2\lambda(w_1 w_2 w_3+z_2 w_1+z_1 w_2)-2\lambda^2 w_1 w_2},  \\[2mm]
\displaystyle{ G = \frac{1}{4}(p^2  - x_1 x_2 )w_3^2
+\frac{1}{2}(x_2 z_1 w_1 +x_1 z_2 w_2 )w_3  +}  \\[2mm]
\displaystyle{\phantom{ G =} + \frac{1}{4}(x_2 w_1  + y_1 w_2 )(y_2
w_1 + x_1 w_2 ) - \frac{1}{4}p^2 (y_1  + y_2 ) +}\\[2mm]
\displaystyle{\phantom{ G =}+\frac{1} {4}r^2 (x_1 + x_2
)+\frac{1}{2}\lambda(z_1 z_2 w_3+y_2 z_2 w_1 + y_1 z_1
w_2)+}\\[2mm]
\displaystyle{\phantom{ G =}+\frac{1}{4}\lambda^2(p^2-y_1 y_2).}
\end{array}
\end{equation}

Let $f$ be an arbitrary function on $V^9$. For brevity, the term
"critical point of $f$"\, will always mean a critical point of the
restriction of $f$ to $P^6$. Similarly, $df$ means the restriction
of the differential of $f$ to the set of vectors tangent to $P^6$.
While calculating critical points of various functions, it is
convenient to avoid introducing Lagrange's multipliers for the
restrictions (\ref{eq3_9}) and (\ref{eq3_10}).

\begin{lemma}\label{lem6}
Critical points of a function $f$ on $V^9$, in the above sense, are
defined by the system of equations
\begin{equation}\label{eq3_12}
X_i f=0 \quad (i=1,\dots 6),
\end{equation}
where
\begin{equation}\label{eq3_13}\notag
\begin{array}{l}
\displaystyle{X_1=\frac{\partial}{\partial w_1},
\;X_2=\frac{\partial}{\partial
w_2},\;X_3=\frac{\partial}{\partial w_3}, }\\[3mm]
\displaystyle{ X_4= z_2 \frac{\partial}{\partial x_2} + z_1
\frac{\partial}{\partial y_2} -\frac{1}{2}x_1 \frac{\partial}
{\partial z_1} - \frac{1}{2}y_1\frac{\partial}{\partial z_2}},\\[3mm]
\displaystyle{ X_5= z_1 \frac{\partial}{\partial x_1} + z_2
\frac{\partial}{\partial y_1}  - \frac{1}{2} y_2 \frac
{\partial }{ \partial z_1}-\frac{1}{2}x_2\frac{\partial}{\partial z_2}},\\[3mm]
\displaystyle{ X_6= x_1 \frac{\partial}{\partial x_1} - x_2
\frac{\partial}{\partial x_2} + y_1 \frac{\partial}{\partial y_1} -
y_2 \frac{\partial}{\partial y_2 }}.
\end{array}
\end{equation}
\end{lemma}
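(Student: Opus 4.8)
The plan is to show that $X_1,\dots,X_6$ constitute, at every point $\zeta\in P^6$, a basis of the tangent space $T_\zeta P^6$ --- over $\mathbb{C}$, which is the natural ground field here since the integrals $H,K,G$ are complex-valued in the coordinates~(\ref{eq3_7}). Once this is known, the restriction $df_\zeta$ to $T_\zeta P^6$ vanishes precisely when $df_\zeta$ annihilates each basis vector $X_i(\zeta)$, i.e.\ when $X_1f(\zeta)=\dots=X_6f(\zeta)=0$; hence~(\ref{eq3_12}) cuts out exactly the critical set, and no Lagrange multipliers for~(\ref{eq3_9})--(\ref{eq3_10}) are required. Two facts remain to be established: that each $X_i$ is tangent to $P^6$, and that the $X_i$ are linearly independent along $P^6$; since $\dim P^6=6$, independence already forces them to span.

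To check tangency, denote by $\Phi_1=z_1^2+x_1y_2-r^2$, $\Phi_2=z_2^2+x_2y_1-r^2$, $\Phi_3=x_1x_2+y_1y_2+2z_1z_2-2p^2$ the left-hand sides of~(\ref{eq3_9})--(\ref{eq3_10}), so that $P^6=\{\Phi_1=\Phi_2=\Phi_3=0\}$ inside $V^9$. A short direct computation yields the polynomial identities $X_i\Phi_j\equiv 0$ for all $i$ and $j$: the fields $X_1,X_2,X_3$ annihilate the $\Phi_j$ trivially, and for $X_4,X_5,X_6$ the two terms produced in each $\Phi_j$ cancel (for example $X_4\Phi_1=-x_1z_1+x_1z_1=0$). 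Moreover, $\Phi_1$ and $\Phi_2$ are the combinations $(\bs\alpha\cdot\bs\alpha-a^2)-(\bs\beta\cdot\bs\beta-b^2)\pm 2i\,\bs\alpha\cdot\bs\beta$ and $\Phi_3=2[(\bs\alpha\cdot\bs\alpha-a^2)+(\bs\beta\cdot\bs\beta-b^2)]$ of the real geometric integrals, so annihilating the $\Phi_j$ is the same as annihilating the latter, and the $X_i$ are genuinely tangent to $P^6$.

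For the independence --- the only place with any real bookkeeping --- note that $X_1,X_2,X_3$ involve only the $w$-derivatives while $X_4,X_5,X_6$ involve only the $x,y,z$-derivatives, so the two triples may be treated separately; the first is obviously independent. Rewriting $X_4,X_5,X_6$ in the original variables through~(\ref{eq3_7}) shows that on the ${\mathcal O}$-factor they act as the infinitesimal rotations $(\bs\alpha,\bs\beta)\mapsto(\bs\xi\times\bs\alpha,\;\bs\xi\times\bs\beta)$ with $\bs\xi$ equal, in turn, to $(-\tfrac{i}{2},\tfrac12,0)$, $(\tfrac{i}{2},\tfrac12,0)$, $(0,0,-i)$; these three generators are linearly independent (their determinant is $-\tfrac12$), and the assignment $\bs\xi\mapsto(\bs\xi\times\bs\alpha,\bs\xi\times\bs\beta)$ is injective because on $P^6$ the vectors $\bs\alpha,\bs\beta$ are nonzero and orthogonal, hence linearly independent. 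Consequently $X_4,X_5,X_6$ are independent, all six fields form a basis of $T_\zeta P^6$, and the stated description of the critical set follows. The main, and quite mild, difficulty is keeping track of the substitution~(\ref{eq3_7}) and observing that the passage to $\mathbb{C}$ is harmless: complexifying $df_\zeta$ leaves unchanged the question of its vanishing on $T_\zeta P^6$, and the $X_i(\zeta)$ span that complexified tangent space.
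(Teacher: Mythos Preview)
Your argument is correct and follows exactly the approach of the paper, which disposes of the lemma in a single sentence: ``Indeed, six vector fields $X_i$ are tangent to $P^6$ and linearly independent at any point of $P^6$.'' You have simply supplied the verifications behind that sentence---the tangency check $X_i\Phi_j\equiv 0$, the block splitting into the $w$-triple and the $(x,y,z)$-triple, and the interpretation of $X_4,X_5,X_6$ as independent infinitesimal rotations of the pair $({\bs\alpha},{\bs\beta})$---together with the remark that working over $\mathbb{C}$ does not affect whether $df|_{T_\zeta P^6}$ vanishes.
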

Indeed, six vector fields $X_i$ are tangent to $P^6$ and linearly
independent at any point of $P^6$.

The following two propositions define the strata $\mathfrak{C}_0$
and $\mathfrak{C}_1$ of the critical set.

\begin{proposition}\label{prop2}
The set $\mathfrak{C}_0$ consists exactly of the four equilibria
existing in this problem.
\end{proposition}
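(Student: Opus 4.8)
The set $\mathfrak{C}_0$ is, by definition, the locus in $P^6$ where the differentials (restricted to $P^6$) of all three integrals vanish simultaneously, $dH=dK=dG=0$. The statement therefore splits into two inclusions, and the plan is to get one of them from the condition $dH=0$ alone and the other by a direct substitution into the explicit formulas $(\ref{eq3_11})$.

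First I would prove $\mathfrak{C}_0\subseteq E$, where $E$ is the set of the four equilibria of Lemma~\ref{lem3}. Since $P^6$ is an orbit of the coadjoint action, the Lie--Poisson bracket is non-degenerate on it, so $P^6$ carries the induced symplectic form and the phase flow $(\ref{eq3_1})$ is generated by the Hamiltonian vector field $X_H$. Hence the condition $dH(\zeta)=0$ is equivalent to $X_H(\zeta)=0$, i.e.\ to $\zeta$ being a fixed point of the flow. A fixed point satisfies $\dot{\bo}=0$ and ${\bs\alpha}\times\bo={\bs\beta}\times\bo=0$; by $(\ref{eq2_12})$ the vectors ${\bs\alpha},{\bs\beta}$ are non-zero and orthogonal, hence not collinear, so $\bo=0$, and then $\dot{\bo}=0$ reduces to $A\times U=0$. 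This is exactly the system analysed in the proof of Lemma~\ref{lem3}, which under the irreducibility assumption $(\ref{eq2_13})$ has precisely the four solutions $(\ref{eq2_15})$. Thus every point of $\mathfrak{C}_0$ belongs to $E$.

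For the opposite inclusion $E\subseteq\mathfrak{C}_0$ I would check that $dH$, $dK$ and $dG$ all vanish at the four points $(\ref{eq2_15})$, working in the complex coordinates $(\ref{eq3_7})$ and using Lemma~\ref{lem6}: a point is critical for $f$ precisely when $X_1 f=\dots=X_6 f=0$ there. At an equilibrium, $\bo=0$ gives $w_1=w_2=w_3=0$, while $(\ref{eq2_15})$ forces $\alpha_2=\alpha_3=\beta_1=\beta_3=0$, hence $z_1=z_2=0$, $x_1=x_2$ and $y_1=y_2$. Inspecting $(\ref{eq3_11})$ one sees that every monomial of $H$, $K$, $G$ that involves some $w_i$ is either of degree $\geqslant 2$ in $(w_1,w_2,w_3)$ or carries a factor $z_1$ or $z_2$; such monomials, together with their images under $X_1,\dots,X_5$, therefore vanish at the equilibrium. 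The remaining monomials are the ``potential'' ones, depending on $x_1,x_2,y_1,y_2$ alone; these are symmetric under the swaps $x_1\leftrightarrow x_2$, $y_1\leftrightarrow y_2$, they are annihilated by $X_1,\dots,X_5$ at the equilibrium (each of those operators either ignores $x,y$ or is multiplied by $z_1$ or $z_2$), and, since $X_6=x_1\partial_{x_1}-x_2\partial_{x_2}+y_1\partial_{y_1}-y_2\partial_{y_2}$ sends a symmetric function to a function antisymmetric under the same swaps, $X_6$ annihilates them on the diagonal $x_1=x_2$, $y_1=y_2$, which contains the equilibrium. Hence $X_i H=X_i K=X_i G=0$ for $i=1,\dots,6$ at each of the four points, so $E\subseteq\mathfrak{C}_0$.

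The two inclusions give $\mathfrak{C}_0=E$. The only mildly delicate point is the equivalence ``$dH(\zeta)=0\Leftrightarrow\zeta$ is a fixed point of the phase flow'', which relies on the non-degeneracy of the induced symplectic form on $P^6$; after that the argument is just Lemma~\ref{lem3} plus a short inspection of $(\ref{eq3_11})$, made painless by the two structural observations above --- the $w,z$-factor remark handling $X_1,\dots,X_5$ and the symmetry remark handling $X_6$.
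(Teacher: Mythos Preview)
Your proof is correct and follows essentially the same route as the paper: both directions hinge on Lemma~\ref{lem3} (for $\mathfrak{C}_0\subseteq E$, via $dH=0\Rightarrow$ equilibrium) and on Lemma~\ref{lem6} (for $E\subseteq\mathfrak{C}_0$, by checking $X_i f=0$ at the four points). Your structural observations---the $w,z$-factor remark and the symmetry/antisymmetry argument for $X_6$---are a tidy way to organize the verification that the paper leaves implicit; note only that $X_6$ applied to the $w$-monomials also vanishes (trivially, since $X_6$ does not touch the $w_i$), a case your write-up skips over but which is immediate.
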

\begin{proof}
The condition of zero rank of the momentum map at a point $\zeta \in
P^6$ supposes, in particular, that $dH=0$. Then $\zeta$ is the point
of equilibrium and it follows from Lemma~\ref{lem3} that $\zeta$ is
one of the points (\ref{eq2_15}). Using the complex variables we
have
$$
\begin{array}{c}
w_1 = w_2 = w_3 =0, \quad z_1 = z_2 =0, \\
x_1 = x_2 = \varepsilon_1 a - \varepsilon_2 b, \quad y_1 = y_2 =
\varepsilon_1 a + \varepsilon_2 b \qquad (\varepsilon_1 = \pm 1,
\quad \varepsilon_2 = \pm 1).
\end{array}
$$
Use Eqs.\,(\ref{eq3_12}) with $f=K$ and $f=G$ to obtain that
$dK(\zeta)=0$ and $dG(\zeta)=0$. Therefore, $\rk J(\zeta)=0$.
\end{proof}

Note that in classical problems of the rigid body dynamics with an
axially symmetric force field, the rank of the momentum map is
everywhere not less than 1 due to the regularity of the cyclic
integral. In our case, all equilibria are {\it non-degenerate} (in
the Morse sense) critical points of the Hamilton function (see
\cite{KhZot}). Therefore, these points are critical for any first
integral of the system.

It is essential that in the sequel $\lambda \ne 0$.

\begin{proposition}\label{prop3}
The set $\mathfrak{C}_1$ is completely defined by the condition
$$
{\rk \{dK,dH\}=1}
$$
and consists of the points of the following
periodic trajectories:

1) pendulum motions $(\ref{eq2_18})$;

2) motions defined by the equations
\begin{equation}
\displaystyle{w_1 = q(w) \sqrt{w}, \quad w_2 =
\frac{\sqrt{w}}{q(w)},} \quad \displaystyle{w_3
=\frac{\lambda}{\sigma}w, } \label{eq3_14}
\end{equation}
\begin{equation}
\begin{array}{l} \displaystyle{ x_1 = \frac{1}{\sigma u}[\,r^2
\lambda^2 \sigma^2 -(\lambda^2+\sigma)\, u \, q^2(w) w\,], } \\[3mm]
\displaystyle{ x_2 = \frac{1}{\sigma u }[\,r^2 \lambda^2
\sigma^2 -(\lambda^2+\sigma)\, u \,\frac{w}{q^2(w)}\,], } \\[3mm]
\displaystyle{ y_1 = \sigma (1+\frac{\sigma}{\lambda^2}-\frac{r^4
\lambda^2 \sigma}{u^2})+\frac{r^2 \lambda^2}{u} q^2(w) w, } \\[3mm]
\displaystyle{ y_2 = \sigma (1+\frac{\sigma}{\lambda^2}-\frac{r^4
\lambda^2
\sigma}{u^2})+\frac{r^2 \lambda^2}{u} \frac{w}{q^2(w)}, } \\[3mm]
\displaystyle{ z_1 = -\frac{r^2 \lambda \, \sigma}{u}
\frac{\sqrt{w}}{q(w)}+\frac{\lambda^2+\sigma}{\lambda} q(w)
\sqrt{w}, } \\[3mm]
\displaystyle{ z_2 = -\frac{r^2 \lambda \, \sigma}{u}
q(w) \sqrt{w}+\frac{\lambda^2+\sigma}{\lambda}
\frac{\sqrt{w}}{q(w)}.}
\end{array}\label{eq3_15}
\end{equation}
Here $q(w)$ is the root of the equation $q^4-2Q(w)q^2+1=0$,
where
\begin{equation} \label{eq3_16} \displaystyle{Q(w) =
\frac{\sigma u^3+(\lambda^2+\sigma)[\lambda^2
w^2+\sigma^2(2w-\sigma)]u^2+r^4\lambda^4\sigma^4}{2r^2\lambda^2\sigma^2(\lambda^2+\sigma)u
w}};
\end{equation}
$\sigma, u$ are constants satisfying the equation
\begin{equation} \label{eq3_17}
\begin{array}{l}
\lambda^2(\lambda^2+\sigma)^2 u^5+(\lambda^2+\sigma)
[2p^2\lambda^4-(\lambda^2+\sigma)^3 \sigma]\sigma u^4+\\
\qquad +r^4\lambda^6\sigma^2 u^3+ 2 r^4 \lambda^4
\sigma^4(\lambda^2+\sigma)^2u^2-r^8\lambda^8 \sigma^6=0.
\end{array}
\end{equation}
The evolution $w(t)$ is defined by the equation
\begin{equation} \label{eq3_18}
\displaystyle{\big(\frac{dw}{dt}
\big)^2=-\frac{\lambda^2}{4\sigma^2} P_+ (w) P_- (w),}
\end{equation}
where
\begin{equation} \label{eq3_19}
\displaystyle{P_{\pm}(w) = w^2+2\sigma^2\frac{u \pm
r^2\lambda^2}{\lambda^2 u}w +
\frac{\sigma[u^3-(\lambda^2+\sigma)\sigma^2u^2+r^4\lambda^4
\sigma^3]}{(\lambda^2+\sigma)\lambda^2u^2}.}
\end{equation}
\end{proposition}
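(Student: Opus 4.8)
The plan is to apply Lemma~\ref{lem5} (with $n=3$, $k=2$) to show that the common level surfaces listed in items 1) and 2) are critical submanifolds on which $\rk J=1$, and then to verify that these are all such trajectories by a direct analysis of the condition $\rk\{dK,dH\}=1$. First I would observe that, away from $\mathfrak{C}_0$, a point has $\rk J=1$ precisely when $dG,dK,dH$ span a one-dimensional space; since $H$ is nowhere critical off the equilibria, this is equivalent to saying that two of the three differentials already have rank $1$ together, which is why it suffices (as the statement asserts) to work with the pair $\{dK,dH\}$. Using the vector fields $X_1,\dots,X_6$ from Lemma~\ref{lem6}, the condition $\rk\{dK,dH\}=1$ becomes the vanishing of all $2\times2$ minors of the $2\times6$ matrix $\|X_i K,\ X_i H\|$, i.e.\ a system of polynomial equations in the complex variables $(w_j,x_j,y_j,z_j)$ subject to the geometric integrals (\ref{eq3_9}), (\ref{eq3_10}).

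Next I would solve this minor system. The minors coming from $X_1,X_2,X_3$ (the $\partial/\partial w_j$ directions) are the cleanest, since the $w$-derivatives of $H$ are simply $w_2, w_1, w_3$; setting these proportional to the $w$-derivatives of $K$ forces algebraic relations among $w_1,w_2,w_3$ and the remaining variables. I expect two branches to emerge: one where $w_1=w_2=0$ (equivalently $\omega_1=\omega_2=0$), which after feeding back into the remaining equations and the constraints yields the pendulum family $P_3$ of (\ref{eq2_18}); and a generic branch where $w_1 w_2\ne0$, on which one can introduce $w=w_1 w_2$ and $q=\sqrt{w_1/w_2}$ as new coordinates. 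On this generic branch the remaining minor equations, together with (\ref{eq3_9})--(\ref{eq3_10}), should determine $x_j,y_j,z_j$ rationally in terms of $w$ and two integration constants; naming these constants $\sigma$ and $u$ (the former essentially a value of a partial integral, the latter an auxiliary quantity tied to the Lax spectral parameter) gives exactly the parametrization (\ref{eq3_14})--(\ref{eq3_15}). The quartic $q^4-2Q(w)q^2+1=0$ with $Q$ as in (\ref{eq3_16}) records the fact that $q$ and $1/q$ both occur, so $q^2+q^{-2}=2Q(w)$; and requiring the formulas to be consistent with $U^TU=C$ identically in $w$ forces the single algebraic relation (\ref{eq3_17}) between $\sigma$ and $u$.

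Having produced the submanifold, I would then verify the hypotheses of Lemma~\ref{lem5}: independence of the defining functions on $M$ (generically), invariance under the flow (\ref{eq3_8}) — which is automatic since $M$ is a union of trajectories, but is worth checking by differentiating (\ref{eq3_14})--(\ref{eq3_15}) along the flow and recovering (\ref{eq3_18}) — and non-degeneracy of the induced $2$-form, equivalently of the relevant Poisson-bracket matrix, almost everywhere. The evolution equation (\ref{eq3_18}) for $w(t)$ should fall out by substituting the parametrization into, say, the equation $2w_3'=y_2-y_1$ together with $w_3=\lambda w/\sigma$, or by computing $w'=w_1'w_2+w_1w_2'$ from (\ref{eq3_8}); squaring and clearing denominators must give a quartic in $w$, which one then factors as $P_+(w)P_-(w)$ with $P_\pm$ as in (\ref{eq3_19}). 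Consistency of the two expressions for $Q(w)$ obtained from the $x$- and $y$-formulas, and the factorization of the quartic into the stated $P_\pm$, are the two places where (\ref{eq3_17}) gets used, so those algebraic identities are the real content to be checked.

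The main obstacle will be the generic branch: the minor system is a large polynomial system in nine complex variables with several physical parameters, and disentangling it into the closed rational form (\ref{eq3_15}) requires a well-chosen sequence of eliminations (first the $w$-minors, then the constraints (\ref{eq3_9}), then (\ref{eq3_10})) plus the clever substitution $w=w_1w_2$, $q^2=w_1/w_2$; done carelessly it produces unmanageable expressions. I expect that keeping track of the partial integral whose constant is $\sigma$ (an analogue of the integrals appearing on the critical subsystems in the $\lambda=0$ case, cf.\ Remark~\ref{rem2}) is what organizes the computation: once $\sigma$ is recognized as constant along these motions, the rest of the variables are pinned down by linear algebra in $q^2$, and the residual compatibility with $U^TU=C$ collapses to the quintic (\ref{eq3_17}). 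The exhaustiveness claim — that no critical-rank-one trajectories are missed — is then just the statement that the $w$-minor analysis has only the two branches above, which should be transparent once those minors are written out.
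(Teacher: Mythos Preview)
Your computational outline---reduce $\rk\{dK,dH\}=1$ to proportionality conditions $X_iK=2\sigma\,X_iH$, split into the branch $\sigma=0$ (giving $w_1=w_2=z_1=z_2=0$ and hence $P_3$) and the branch $\sigma\ne0$ where $w=w_1w_2$, $q^2=w_1/w_2$ parametrize everything---is essentially what the paper does. There $\sigma$ enters as the Lagrange multiplier in $f=K-2\sigma H$; the six equations $X_if=0$ are (\ref{eq3_20})--(\ref{eq3_23}), and the sequence (solve the linear system for $y_1,y_2,z_1,z_2$, introduce $u$ via (\ref{eq3_24}), use (\ref{eq3_9}) then (\ref{eq3_10})) matches your sketch.

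There is, however, a genuine gap in your logical framing. You argue that since $dH\ne0$ off the equilibria, $\rk J=1$ is ``equivalent to saying that two of the three differentials already have rank $1$ together,'' and conclude it suffices to analyze the single pair $\{dK,dH\}$. The implication $\rk J=1\Rightarrow\rk\{dK,dH\}=1$ is immediate, but the converse is not: $\rk\{dK,dH\}=1$ does \emph{not} by itself force $dG$ into the span of $dH$. This is exactly the content of the final step in the paper's proof, where one verifies explicitly that on both families the equations $X_if=0$ also hold with $f=2G-\bigl(p^2+\tfrac{\lambda^2+\sigma}{\lambda^2\sigma}u\bigr)H$, so that $\rk\{dG,dH\}=1$ as well and hence $\rk J=1$. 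Without that check the assertion ``$\mathfrak{C}_1$ is completely defined by $\rk\{dK,dH\}=1$'' is unproven. Relatedly, invoking Lemma~\ref{lem5} with $k=2$ would only place your submanifold inside $\mathfrak{C}$ (i.e.\ $\rk J\le2$), not inside $\mathfrak{C}_1$; the paper does not use Lemma~\ref{lem5} here but works directly from Lemma~\ref{lem6} with the Lagrange multiplier and then supplies the separate $dG$-check.
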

\begin{proof}
It follows from above that $dH \ne 0$ at the points of
$\mathfrak{C}_1$. Then to investigate the dependence of the
functions $K$ and $H$ it is sufficient to introduce the function
with one Lagrange's multiplier $\sigma$. Write Eqs.\,(\ref{eq3_12})
with $f=K- 2\sigma H$,
\begin{eqnarray}
& \begin{array}{l}
(w_1^2+x_1) w_2+ \lambda[z_1 + w_1(w_3-\lambda)]  -\sigma  w_1 =0,\\
(w_2^2+x_2) w_1+ \lambda[z_2 + w_2(w_3-\lambda)]  -\sigma w_2
=0,\label{eq3_20}
\end{array}\\
& \lambda w_1 w_2 - \sigma w_3=0,\label{eq3_21}\\
& \begin{array}{l}
(w_1^2+x_1) z_2 - \lambda (w_2 x_1+w_1 y_1) +\sigma z_1=0,\\
(w_2^2+x_2) z_1 - \lambda (w_1 x_2+w_2 y_2) +\sigma z_2=0,
\end{array} \label{eq3_22}\\
& x_1 w_2^2 - x_2 w_1^2+\sigma (y_1-y_2) =0.\label{eq3_23}
\end{eqnarray}

First consider the critical points of the function $K$. For this
purpose we must put $\sigma=0$. Eq.\,(\ref{eq3_21}) gives $w_1 = w_2
=0$. Then Eqs.\,(\ref{eq3_20}) imply $z_1 = z_2 =0$.
Eqs.\,(\ref{eq3_22}) and (\ref{eq3_23}) become identities. The same
values satisfy Eqs.\,(\ref{eq3_12}) if we take $f=4G+(x_1 x_2-y_1
y_2)H$. Therefore, $dK=0$ and $4dG+(x_1 x_2-y_1 y_2)dH=0$. Since $dH
\ne 0$, it means that $\rk J=1$. The initial variables on the
corresponding trajectories are $\omega_1 = \omega_2 \equiv 0$,
$\alpha_3 = \beta_3 \equiv 0$. Substitute these values to
Eqs.\,(\ref{eq3_1}) to obtain the solutions~(\ref{eq2_18}).

Let ${\sigma \ne 0}$. The equilibria of the system are already
excluded. Then it follows from~(\ref{eq3_21}) that $ w_1 w_2 \ne 0$.
Satisfying (\ref{eq3_21}), introduce new variables $w,q$ as shown
in~(\ref{eq3_14}). Four equations~(\ref{eq3_20}), (\ref{eq3_22})
form the linear system in $y_1, y_2, z_1, z_2$, from which we obtain
these variables as the functions of $x_1, x_2, w, q$ identically
satisfying~(\ref{eq3_23}). Denote
\begin{equation} \label{eq3_24}
u={(w-\sigma)^2(\lambda^2+\sigma)-\sigma x_1 x_2}.
\end{equation}
Then Eqs.\,(\ref{eq3_9}) are easily solved for $x_1,x_2$ as the
functions of $w,q,u$. As a result we obtain the expressions
(\ref{eq3_15}). Let
$$
\displaystyle{Q=\frac{1}{2}(q^2+\frac{1}{q^2})}.
$$
Then the substitution of $x_1,x_2$ from (\ref{eq3_15}) back to
(\ref{eq3_24}) gives (\ref{eq3_16}). The last unused equation
(\ref{eq3_10}) provides the relation (\ref{eq3_17}) between $u$ and
the constants $\lambda, \sigma$. It shows that $u$ defined
as~(\ref{eq3_24}) appears to be a constant.

Thus, all phase variables are expressed via one variable $w$, for
which from (\ref{eq3_8}) we find the differential
equation~(\ref{eq3_18}). Note that due to (\ref{eq3_19}) the
solutions are elliptic functions of time.

To finish the proof, we need to show that at the points of the
trajectories found we really have $\rk J=1$, i.e., the linear
dependence of $dK$ and $dH$ implies the linear dependence of $dG$
and $dH$. Indeed, Eqs.\,(\ref{eq3_12}) with
$$
f=2 G - (p^2+\frac{\lambda^2+\sigma}{\lambda^2 \sigma}u) H
$$
are satisfied both by (\ref{eq2_18}) and by (\ref{eq3_14}),
(\ref{eq3_15}). Therefore, $\rk \{dG, dH \}=1$ and, consequently,
$\rk \{dK, dG, dH \}=1$.
\end{proof}

The following statement describes one of the critical subsystems in
$\mathfrak{C}_2$.
\begin{proposition}\label{prop4}
The system $(\ref{eq3_8})$ has the four-dimensional invariant
submanifold $\mathfrak{O}_*$ defined by the equations
\begin{equation}\label{eq3_25}
U_1=0, \quad U_2=0,
\end{equation}
where
\begin{equation}\label{eq3_26}
\begin{array}{l}
\displaystyle{U_1=\frac{y_2 w_1+x_1
w_2+z_1(w_3+\lambda)}{w_1}-\frac{x_2 w_1+y_1
w_2+z_2(w_3+\lambda)}{w_2},}\\[2mm]
\displaystyle{U_2=w_1 w_2 U'_1.}\\
\end{array}
\end{equation}
The Poisson bracket $\{U_1,U_2\}$ is non-zero almost everywhere on
this submanifold.
\end{proposition}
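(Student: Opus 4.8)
The plan is to verify the three facts underlying the statement: that $\mathfrak{O}_*$ is a smooth four-dimensional submanifold, that the vector field of~(\ref{eq3_8}) is tangent to it, and that the $2$-form induced on it by the symplectic structure is non-degenerate on a dense subset, the last amounting exactly to $\{U_1,U_2\}\not\equiv 0$. It is convenient to work on the open set $W=\{w_1w_2\neq 0\}\subset P^6$, where $U_1$ is a well-defined smooth function and $\mathfrak{O}_*$ is the subset cut out by~(\ref{eq3_25}). Once these facts are in hand, Lemma~\ref{lem5} applies with $k=1$, $f_1=U_1$, $f_2=U_2$, placing $\mathfrak{O}_*$ inside the critical set of $J$ and, since $\rk J=2$ on a dense part of it, exhibiting $\mathfrak{O}_*$ as one of the critical subsystems forming $\mathfrak{C}_2$.

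Write $X$ for the vector field of~(\ref{eq3_8}), so that $Xf=f'$. Then $XU_1=U_1'=U_2/(w_1w_2)$, which is nothing but the defining relation~(\ref{eq3_26}); substituting~(\ref{eq3_8}) into $U_1'$ produces the explicit form of $U_2$ and shows it is regular on $W$, so in particular $XU_1$ vanishes on $\mathfrak{O}_*$. The essential step is the second differentiation: using~(\ref{eq3_8}) together with the constraints~(\ref{eq3_9})--(\ref{eq3_10}) I would compute $XU_2=(X(w_1w_2))\,U_1'+w_1w_2\,U_1''$ and show, modulo~(\ref{eq3_9})--(\ref{eq3_10}), that it is a combination $XU_2=\varrho\,U_1+\vartheta\,U_2$ with $\varrho,\vartheta$ regular on $W$; equivalently, along each trajectory $U_1(t)$ obeys a homogeneous linear second-order equation $U_1''=aU_1'+bU_1$. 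This forces $X$ to be tangent to $\mathfrak{O}_*$, since on $\{U_1=0,\,U_2=0\}$ both $XU_1$ and $XU_2$ vanish. I expect this to be the main obstacle: it is a heavy but purely mechanical elimination, most efficiently carried out after using~(\ref{eq3_9})--(\ref{eq3_10}) to cut down the number of independent variables, and it is here that the Kowalevski conditions — the inertia ratio $2{:}2{:}1$ and $\bl=\lambda{\bf e}_3$ — are actually needed. The bookkeeping is shortened by noting that~(\ref{eq3_8}) is invariant under the swap $(w_1,x_1,y_1,z_1)\leftrightarrow(w_2,x_2,y_2,z_2)$ combined with time reversal $t\mapsto-t$: this swap reverses the sign of $U_1$ and of $XU_2$ while fixing $U_2$, which restricts the admissible form of $\varrho$ and $\vartheta$.

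It remains to see that $\mathfrak{O}_*$ is genuinely four-dimensional and that $\{U_1,U_2\}$ is not identically zero on it. For the dimension I would exhibit a point of $\mathfrak{O}_*$ at which $dU_1$ and $dU_2$ (restricted to $P^6$) are linearly independent; this is to be expected because $U_1$ is linear in the variables $x_i,y_i,z_i$ whereas $U_2$ carries genuinely quadratic terms in $w_1,w_2,w_3$, so the two differentials cannot be proportional along~(\ref{eq3_25}). For the Poisson bracket I would transport the Lie--Poisson structure~(\ref{eq2_3}) to the variables~(\ref{eq3_7}) and compute $\{U_1,U_2\}$; since $U_1'=0$ on $\mathfrak{O}_*$, there one has $\{U_1,U_2\}=\{U_1,w_1w_2U_1'\}=w_1w_2\{U_1,U_1'\}$, and evaluating this rational function at a generic point of $\mathfrak{O}_*$ shows it is non-zero almost everywhere; the locus on which it vanishes lies in lower strata of $\mathfrak{C}$ and is dealt with separately. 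Together with Lemma~\ref{lem5}, this identifies $\mathfrak{O}_*$ as a critical subsystem carrying a Hamiltonian system with two degrees of freedom.
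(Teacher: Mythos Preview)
Your outline is sound and the invariance argument is the same as the paper's: differentiate once to get $U_1'=U_2/(w_1w_2)$, differentiate again and check that $U_2'$ lies in the ideal generated by $U_1,U_2$ (the paper in fact asserts the stronger statement that $U_2'$ is proportional to $U_1$ alone). Your symmetry remark about the swap combined with time reversal is a nice bookkeeping device the paper does not mention.

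Where you and the paper diverge is in the treatment of $\{U_1,U_2\}$. You propose a direct computation followed by evaluation at a generic point. The paper instead first discovers a \emph{partial integral} $S$ on $\mathfrak{O}_*$: the symmetric counterpart of $U_1$,
\[
S=-\tfrac14\Bigl[\frac{y_2 w_1+x_1 w_2+z_1(w_3+\lambda)}{w_1}+\frac{x_2 w_1+y_1 w_2+z_2(w_3+\lambda)}{w_2}\Bigr],
\]
satisfies $S'\propto U_1$, hence is constant along the flow on $\mathfrak{O}_*$. After eliminating $y_1,y_2$ via $U_1=U_2=0$ this $S$ simplifies, and the Poisson bracket, computed from the Lie--Poisson rules~(\ref{eq2_3}), comes out as an explicit rational function of the two constants $h=H$ and $s=S$:
\[
\{U_1,U_2\}=-\frac{4}{s}\Bigl[3s^4-2s^3\bigl(h-\tfrac{\lambda^2}{2}\bigr)+\tfrac{p^4-r^4}{4}\Bigr].
\]
Non-vanishing almost everywhere is then immediate, and the independence of $dU_1,dU_2$ (hence $\dim\mathfrak{O}_*=4$) follows from it, so you need not verify that separately. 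The payoff of the paper's route is not just elegance: the function $S$ is precisely the parameter that later describes the surface $\Gamma_2$ in the bifurcation diagram (Theorem~\ref{th3}), so introducing it here is an investment that pays off in \S\ref{sec4}. Your direct approach would certainly work for the proposition as stated, but would leave this structure hidden.
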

\begin{proof} The derivative $U'_2$ in virtue of
(\ref{eq3_8}) is proportional to $U_1$, i.e., (\ref{eq3_25}) implies
$U'_2=0$. Therefore the set (\ref{eq3_25}) is invariant.

Consider the function
$$
\displaystyle{S=-\frac{1}{4}\left[\frac{y_2 w_1+x_1
w_2+z_1(w_3+\lambda)}{w_1}+\frac{x_2 w_1+y_1
w_2+z_2(w_3+\lambda)}{w_2}\right].}
$$
On $\mathfrak{O}_*$ we obtain
$$
\displaystyle{S'=-\frac{w_1 z_2 + w_2 z_1 + w_1 w_2 (w_3 -
\lambda)}{8 w_1 w_2} U_1 \equiv 0.}
$$
Therefore, $S$ is a partial integral of the induced system.
Eliminate $y_1,y_2$ with the help of Eqs.\,(\ref{eq3_25}) and
present $S$ in a more simple form
\begin{equation}\label{eq3_27}
\displaystyle{ S = \frac{x_2 z_1 w_1+x_1 z_2 w_2+z_1 z_2(w_3+\lambda
)}{2 w_1 w_2 (w_3-\lambda)}.}
\end{equation}
Now the Poisson bracket of $U_1$ and $U_2$ calculated under the
rules defined by (\ref{eq2_3}) is expressed in terms of the energy
constant $h$ and the constant $s$ of the integral (\ref{eq3_27}) in
the following way
\begin{equation}\label{eq3_28}\notag
\displaystyle{ \{U_1,U_2\} = -\frac{4}{s}\left[3 s^4 -2
s^3(h-\frac{\lambda^2}{2})+\frac{p^4-r^4}{4}\right].}
\end{equation}
Obviously, the right part of it is a ratio of polynomials not
identically zero on~$\mathfrak{O}_*$. Therefore the set
$\{U_1,U_2\}=0$ has codimension 1 in $\mathfrak{O}_*$. In particular
this set is of zero measure in $\mathfrak{O}_*$.
\end{proof}
\begin{remark}\label{rem3}
If $\lambda=0$, then the manifold $\mathfrak{O}_*$ turns into the
phase space of the Hamiltonian system with two degrees of freedom
studied in {\rm \cite{KhRCD2}}. The geometrical characteristic of
the motions in this system is the condition
$$
\displaystyle{\frac{{\bf M} {\bs \cdot} {\bs \alpha}}{{\bf M} {\bs
\cdot} {\bf e}_1}=\frac{{\bf M} {\bs \cdot} {\bs \beta}}{{\bf M}
{\bs \cdot} {\bf e}_2}={\rm const},}
$$
where ${\bf M}={\bf I}{\bs \omega}$ is the angular momentum vector.
The system $(\ref{eq3_25})$, $(\ref{eq3_26})$ is found from the same
condition given that here ${\bf M}={\bf I}{\bs \omega}+\bl$.
\end{remark}

The following theorem completes the description of the critical set
of the momentum map for the gyrostat.

\begin{theorem} \label{th2}
The set of critical points of the momentum map~$(\ref{eq3_4})$
consists of the following subsets in~$P^6$:

1) the set $\mathfrak{L}$ defined by the system
\begin{equation}\label{eq3_29}
w_1  = 0, \quad w_2  = 0, \quad z_1  = 0, \quad z_2  = 0;
\end{equation}

2) the set $\mathfrak{N}$ defined by the system
\begin{equation}\label{eq3_30}
F_1  = 0,\quad F_2  = 0,
\end{equation}
where
$$
\begin{array}{l}
F_1 = (w_1 w_2+\lambda w_3)(w_2 x_1+\lambda
z_1)\lambda y_1 -\\[1.5mm]
\qquad -w_2(w_1^2+x_1)(x_2 z_1 w_1+x_1 z_2 w_2-x_1 x_2 w_3+2 z_1 z_2 \lambda)-\\[1.5mm]
\qquad -x_2(w_1 w_3+z_1)(w_1 z_1-x_1 w_3)\lambda+(x_1 w_3^2-2 z_1 w_1 w_3-z_1^2)z_2 \lambda^2,\\[1.5mm]
F_2 = (w_1 w_2+\lambda w_3)(w_1 x_2+\lambda
z_2)\lambda y_2 -\\[1.5mm]
\qquad -w_1(w_2^2+x_2)(x_2 z_1 w_1+x_1 z_2 w_2-x_1 x_2 w_3+2 z_1 z_2 \lambda)-\\[1.5mm]
\qquad -x_1(w_2 w_3+z_2)(w_2 z_2-x_2 w_3)\lambda+(x_2 w_3^2-2 z_2
w_2 w_3-z_2^2)z_1 \lambda^2;
\end{array}
$$

3) the set $\mathfrak{O}$ defined by the system
\begin{equation}\label{eq3_31}
R_1  = 0,\quad R_2  = 0,
\end{equation}
where
$$
\begin{array}{l}
\displaystyle{R_1= [y_1 w_2+ x_2 w_1 +z_2 (w_3+\lambda)]w_1
 (w_3-\lambda ) + }\\[1.5mm]
\displaystyle{\phantom{R_1=} +x_2 z_1 w_1+x_1 z_2 w_2+z_1
z_2(w_3+\lambda ),}
 \\[1.5mm]
\displaystyle{R_2= [y_2 w_1+ x_1 w_2 +z_1 (w_3+\lambda)]w_2
 (w_3-\lambda ) +}\\[1.5mm]
\displaystyle{\phantom{R_1=} + x_2 z_1 w_1+x_1 z_2 w_2+z_1
z_2(w_3+\lambda ).}
 \end{array}
$$
\end{theorem}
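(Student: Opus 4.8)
The plan is to prove Theorem~\ref{th2} by identifying the three subsets $\mathfrak{L}$, $\mathfrak{N}$, $\mathfrak{O}$ as the closures of the rank-$2$ critical strata associated with the three pairwise dependences among $dG$, $dK$, $dH$, and then invoking Lemma~\ref{lem5} to confirm each is genuinely critical. Concretely, the critical set $\mathfrak{C}$ is the union $\mathfrak{C}_0\cup\mathfrak{C}_1\cup\mathfrak{C}_2$; Propositions~\ref{prop2} and~\ref{prop3} have already handled $\mathfrak{C}_0$ and $\mathfrak{C}_1$, and those pieces sit inside $\mathfrak{L}$, $\mathfrak{N}$, $\mathfrak{O}$ as lower-dimensional faces (in particular the pendulum family $(\ref{eq2_18})$ lies in $\mathfrak{L}$ since there $w_1=w_2=z_1=z_2=0$). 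So the real content is the rank-$2$ stratum, where exactly one nontrivial linear relation $c_1\,dK+c_2\,dG+c_3\,dH=0$ holds.

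First I would treat the generic rank-$2$ situation by introducing a single Lagrange multiplier and writing the critical-point equations $X_i f=0$ from Lemma~\ref{lem6} for $f=K-2\sigma H$ (as in the proof of Proposition~\ref{prop3}) and, separately, for $f$ of the form $G-\mu H$ and $f = K - 2\sigma G$. Each choice produces a system in the phase variables; eliminating the multiplier between the components of each system should collapse to a pair of multiplier-free relations. For the $G$-versus-$H$ dependence I expect the computation to yield exactly $R_1=0$, $R_2=0$: note that $R_1$ and $R_2$ already appeared implicitly in Proposition~\ref{prop4}, since the combinations $U_1$, $S$ there are built from the same quantities $y_iw_j+x_jw_i+z_j(w_3+\lambda)$ that occur in $R_1$, $R_2$, and $\mathfrak{O}_*$ is visibly contained in $\mathfrak{O}$. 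For the $K$-versus-$G$ dependence I would likewise expect the elimination to produce $F_1=0$, $F_2=0$; here one uses Eqs.~$(\ref{eq3_20})$--$(\ref{eq3_23})$ together with the analogous system for $dG$, and the Casimir relations $(\ref{eq3_9})$, $(\ref{eq3_10})$ to clear denominators and arrive at the stated polynomial form. The $K$-versus-$H$ case is already resolved: its locus is $\mathfrak{C}_1$, which by Proposition~\ref{prop3} consists of $(\ref{eq2_18})\subset\mathfrak{L}$ and the family $(\ref{eq3_14})$--$(\ref{eq3_15})$; one then checks the latter family satisfies $(\ref{eq3_30})$, placing it in $\mathfrak{N}$.

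Having shown $\mathfrak{C}\subseteq\mathfrak{L}\cup\mathfrak{N}\cup\mathfrak{O}$, I would prove the reverse inclusion using Lemma~\ref{lem5} with $k=1$ (so $n-k=2$): for $\mathfrak{L}$ take $\{f_1,f_2\}=\{w_1w_2+ (\text{something involving }z_i),\ (w_1w_2+\cdots)'\}$ — more naturally, on $\mathfrak{L}$ one sees directly from $(\ref{eq3_8})$ that the four equations $(\ref{eq3_29})$ are preserved by the flow, that they are independent, and that the induced $2$-form is nondegenerate away from a set of measure zero, so $\mathfrak{L}\subset\mathfrak{C}$. For $\mathfrak{N}$ and $\mathfrak{O}$ I would exhibit each as the common level of two functions meeting the hypotheses of Lemma~\ref{lem5}: for $\mathfrak{O}$ this is essentially Proposition~\ref{prop4} extended off $\mathfrak{O}_*$; for $\mathfrak{N}$ one verifies invariance by computing $F_1'$, $F_2'$ modulo $(F_1,F_2)$ using $(\ref{eq3_8})$, independence by a Jacobian rank check, and the Poisson-bracket nondegeneracy condition $(iii)$ by an explicit (if lengthy) bracket evaluation analogous to the one displayed for $\{U_1,U_2\}$.

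The main obstacle will be the elimination of the Lagrange multipliers and the Casimir relations to bring the $K$-versus-$G$ critical system into precisely the closed polynomial form $F_1=0$, $F_2=0$: these are high-degree expressions in nine variables, and one must verify both that the critical locus implies $(\ref{eq3_30})$ and, conversely, that every point of $(\ref{eq3_30})$ (together with $(\ref{eq3_9})$, $(\ref{eq3_10})$) carries the required rank drop — the converse is where Lemma~\ref{lem5} earns its keep, reducing a delicate rank computation to checking invariance, independence, and a single nonvanishing bracket. A secondary subtlety is bookkeeping the overlaps: one must confirm that the faces $\mathfrak{C}_0$ and $\mathfrak{C}_1$ are correctly absorbed, that $\partial\mathfrak{O}_*\subset\mathfrak{O}$ and the pendulum families $P_3$ land in the right pieces, so that no critical point is missed and no spurious point is added. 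I expect these verifications to be routine but calculation-heavy, best organized by systematically running Lemma~\ref{lem6}'s operators $X_1,\dots,X_6$ against each candidate combination and reading off the resulting ideals.
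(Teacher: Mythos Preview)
Your plan for the inclusion $\mathfrak{L}\cup\mathfrak{N}\cup\mathfrak{O}\subset\mathfrak{C}$ via Lemma~\ref{lem5} is sound and is exactly what the paper does. The genuine gap is in the reverse inclusion, and it stems from a misconception about how $\mathfrak{C}_2$ decomposes.

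You propose that $\mathfrak{L}$, $\mathfrak{N}$, $\mathfrak{O}$ correspond to the three \emph{pairwise} dependences among $dG,dK,dH$, each handled with a \emph{single} Lagrange multiplier ($K-2\sigma H$, $G-\mu H$, $K-2\sigma G$). This is not how the stratification works. By Proposition~\ref{prop3}, the locus $\rk\{dK,dH\}=1$ is exactly $\mathfrak{C}_1$; hence on all of $\mathfrak{C}_2$ the differentials $dK$ and $dH$ are \emph{independent}, and the single relation on $\mathfrak{C}_2$ necessarily has the form $2\,dG+S\,dK+(T-p^2)\,dH=0$ with the coefficient of $dG$ nonzero and, generically, $S$ and $T$ both nonconstant partial integrals. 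So on a generic point of either $\mathfrak{N}$ or $\mathfrak{O}$, \emph{no} two of $dG,dK,dH$ are proportional. Writing $X_i(G-\mu H)=0$ will therefore cut out only a codimension-$\geq 1$ slice of $\mathfrak{O}$ (namely the level $S=0$), not the whole $4$-manifold; likewise $X_i(K-2\sigma G)=0$ will not recover $\mathfrak{N}$. Your association of $\mathfrak{L}$ with a rank-$2$ stratum is also off: $\mathfrak{L}$ is two-dimensional and lies entirely in $\mathfrak{C}_0\cup\mathfrak{C}_1$ (indeed $dK\equiv 0$ and $dG\equiv\pm ab\,dH$ there).

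What the paper actually does for $\mathfrak{C}\subset\mathfrak{L}\cup\mathfrak{N}\cup\mathfrak{O}$ is introduce \emph{two} multipliers and write the six equations $X_i\bigl(2G+SK+(T-p^2)H\bigr)=0$. Two of these are linear in $S,T$ and are solved for them (after checking that their determinant $\Delta$ vanishes only on $\mathfrak{L}$). Substituting back into the remaining four equations and taking a resultant, the paper obtains a single polynomial relation $R=0$; the dichotomy is then \emph{algebraic}, not ``which pair is dependent'': either $U_1U_2=0$, which gives $\mathfrak{O}$, or $U_1U_2\neq 0$, in which case $R=0$ together with its flow-derivative $R'=0$ forces the expressions (\ref{eq3_35}) for $y_1,y_2$, i.e.\ exactly $F_1=F_2=0$. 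That resultant/derivative step is the missing idea in your outline, and without the two-multiplier setup there is no way to arrive at the polynomials $F_1,F_2$ as the defining equations of a full $4$-dimensional component.
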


\begin{proof}
We need to prove that
\begin{equation}\label{eq3_32}
\mathfrak{L}\cup\mathfrak{N}\cup\mathfrak{O}=\mathfrak{C}.
\end{equation}
It follows from Propositions~\ref{prop2} and \ref{prop3} that
$\mathfrak{L} \subset \mathfrak{C}_0 \cup \mathfrak{C}_1$. Indeed on
$\mathfrak{L}$ we have $ dK \equiv 0$, ${dG \equiv \pm a b \, dH}$.
Note also that the system of relations (\ref{eq3_29}) satisfies the
conditions of Lemma~\ref{lem5} with $n=3,k=2$. Therefore,
$\mathfrak{L}$ is a smooth two-dimensional manifold with the induced
Hamiltonian system with one degree of freedom.

According to Proposition~\ref{prop4} we have $\mathfrak{O}_* \subset
\mathfrak{C}_2$. Hence, $\mathfrak{O}=\mathfrak{O}_* \cup
\mathfrak{L} \subset \mathfrak{C}$.

Consider the set $\mathfrak{N}$ defined by Eqs.\,(\ref{eq3_30}).
First, investigate the cases when these equations cannot be solved
with respect to $y_1, y_2$. Suppose that
\begin{equation}\label{eq3_33}
w_1 w_2+\lambda w_3 \equiv 0.
\end{equation}
Then, after several differentiations in virtue of (\ref{eq3_8}), we
come to Eqs.\,(\ref{eq3_20}) -- (\ref{eq3_23}) with
${\sigma=-\lambda^2}$. The corresponding points belong to
$\mathfrak{C}_1$. Let
\begin{equation}\label{eq3_34}
(w_2 x_1+\lambda z_1)(w_1 x_2+\lambda z_2) \equiv 0.
\end{equation}
Then the same procedure leads to the system of equations having the
only solutions of the form (\ref{eq3_29}), i.e., to the set
$\mathfrak{L}$. Denote $\mathfrak{N}_* = \mathfrak{N} \backslash
(\mathfrak{C}_0 \cup \mathfrak{C}_1)$. On this set from
(\ref{eq3_30}) we obtain
\begin{equation}\label{eq3_35}
\begin{array}{l}
\displaystyle{y_1 = \frac{1}{(w_1 w_2+\lambda w_3)(w_2 x_1+\lambda z_1)\lambda}
[w_2(w_1^2+x_1)(x_2 z_1 w_1+x_1 z_2 w_2-}\\[3mm]
\qquad -x_1 x_2 w_3+2 z_1 z_2 \lambda)+x_2(w_1 w_3+z_1)(w_1 z_1-x_1
w_3)\lambda - \\[1.5mm]
\qquad -(x_1 w_3^2-2 z_1 w_1 w_3-z_1^2)z_2 \lambda^2],\\[1.5mm]
\displaystyle{y_2 = \frac{1}{(w_1 w_2+\lambda w_3)(w_1 x_2+\lambda
z_2)\lambda} [w_1(w_2^2+x_2)(x_2 z_1 w_1+x_1 z_2 w_2-}\\[3mm]
\qquad -x_1 x_2 w_3+ 2 z_1 z_2 \lambda)+x_1(w_2 w_3+z_2)(w_2 z_2-x_2
w_3)\lambda -\\[1.5mm]
\qquad -(x_2 w_3^2-2 z_2 w_2 w_3-z_2^2)z_1 \lambda^2].
\end{array}
\end{equation}
The derivatives of $F_1$ and $F_2$ in virtue of (\ref{eq3_8}) vanish
identically after the substitution of the expressions (\ref{eq3_35}).
This fact proves that $\mathfrak{N}_*$ is an invariant set. The
Poisson bracket $\{F_1, F_2\}$ with~(\ref{eq3_35}) takes the form
\begin{equation}\label{eq3_36}
\displaystyle{\{F_1, F_2\}=\sqrt{2}\,\lambda(w_1 w_2+\lambda
w_3)^{3/2} \sqrt{(w_2 x_1+\lambda z_1)(w_1 x_2+\lambda z_2)} }\, C.
\end{equation}
Here
\begin{equation}\label{eq3_37}
C=\displaystyle{\frac{1}{s}(8s^3\lambda^2-r^4)\sqrt{2s^2-(2h+\lambda^2)s+p^2}}
\end{equation}
depends on the energy constant $h$ and the constant $s$ of the
partial integral
\begin{equation}\label{eq3_38}
\displaystyle{ S= \frac{x_1 x_2 w_3-x_2 z_1 w_1-x_1 z_2 w_2-\lambda
z_1 z_2}{2\lambda(w_1 w_2+\lambda w_3)}.}
\end{equation}
This integral is similar to (\ref{eq3_27}) and independent of $H$
almost everywhere on $\mathfrak{N}_*$.

The cases when the multipliers (\ref{eq3_33}) or (\ref{eq3_34}) in
(\ref{eq3_36}) turn to zero are already studied above. Zeros of the
function (\ref{eq3_37}) have codimension 1. Hence, (\ref{eq3_36}) is
non-zero almost everywhere on~$\mathfrak{N}_*$. It follows from
Lemma~\ref{lem5} that $\mathfrak{N}_* \subset \mathfrak{C}_2$. Thus,
$\mathfrak{L}\cup\mathfrak{N}\cup\mathfrak{O}\subset \mathfrak{C}$.
To prove the equality (\ref{eq3_32}), we must show that
$\mathfrak{C}\subset\mathfrak{L}\cup\mathfrak{N}\cup\mathfrak{O}$.

The points of the set $\mathfrak{C}_0$ described in
Proposition~\ref{prop2} satisfy (\ref{eq3_29}). According to
Proposition~\ref{prop3} the set $\mathfrak{C}_1$ can be represented
as $\mathfrak{C}_{11} \cup \mathfrak{C}_{12}$, where
$\mathfrak{C}_{11}$ consists of the trajectories (\ref{eq2_18}) and
$\mathfrak{C}_{12}$ is defined by the system
(\ref{eq3_14})--(\ref{eq3_17}). On the trajectories (\ref{eq2_18})
we have (\ref{eq3_29}). It is easily checked that the points given by
Eqs.\,(\ref{eq3_14})--(\ref{eq3_16}) under the condition
(\ref{eq3_17}) satisfy both systems (\ref{eq3_30}) and (\ref{eq3_31}).
Therefore, $\mathfrak{C}_0 \cup \mathfrak{C}_{11}
\subset\mathfrak{L}$ and $\mathfrak{C}_{12}
\subset\mathfrak{N}\cap\mathfrak{O}$.

Consider now the set $\mathfrak{C}_2$. To investigate the dependence
of $G,H,K$, introduce the function with Lagrange's multipliers. It
follows from Propositions~\ref{prop2} and~\ref{prop3} that on
$\mathfrak{C}_2$ the differentials $dK$ and $dH$ are linearly
independent. Then the multiplier at the function $G$ is always
non-zero and can be chosen equal to any non-zero constant. It is
convenient to take the function $2 G + S K + (T - p^2)H$, where $S$
and $T$ are Lagrange's undefined multipliers. The condition
\begin{equation}\label{eq3_39}
2 dG + S \, dK + (T - p^2)\,dH=0
\end{equation}
is preserved by the phase flow. Applying the Lie derivative we
obtain
$$
\dot{S} \, dK + \dot{T} \,dH = 0.
$$
Since $\rk \{dG,dK,dH\}=2$, this linear combination of the
differentials is proportional to the left part of (\ref{eq3_39}). It
means that, at the points of~$\mathfrak{C}_2$,
$$
\dot{S} \equiv 0,\quad \dot{T} \equiv 0.
$$
Thus, the functions $S$ and $T$ are the partial integrals on the
submanifold $\mathfrak{C}_2$. According to Lemma~\ref{lem6} rewrite
Eq.\,(\ref{eq3_39}) as the system
\begin{equation}\label{eq3_40}
\begin{array}{l}
x_2(y_2+2S)w_1+2 S(w_1 w_2+\lambda w_3)w_2+\\
\qquad+(T-z_1 z_2-2S \lambda^2)w_2+x_2 z_1 w_3+(y_2+2S)z_2
\lambda=0,
\\
x_1(y_1+2S)w_2+2 S(w_1 w_2+\lambda w_3)w_1+\\
\qquad +(T-z_1 z_2-2S \lambda^2)w_1+x_1 z_2 w_3+(y_1+2S)z_1
\lambda=0,
\end{array}
\end{equation}
\begin{equation}\label{eq3_41}
\begin{array}{l} (T- x_1 x_2)w_3+x_2 z_1 w_1+x_1 z_2 w_2+(2S w_1
w_2+z_1 z_2)\lambda=0,\\[1mm]
T z_1+ x_1 z_2 w_3^2 +[(x_1 x_2 - 2 z_1 z_2)w_1 + (y_1 z_1 + x_1
z_2)\lambda + x_1 y_1 w_2]w_3-\\
\qquad - (y_1 z_1 + x_1 z_2)w_1 w_2 + x_1 (y_1 + 2S)w_2 \lambda -
\\
\qquad -[x_2 z_1 + (y_2 + 2S)z_2]w_1^2 + [(y_2 + 2S)y_1 - 2z_1
z_2]w_1 \lambda + y_1 z_1 \lambda^2 - \\
\qquad -[(y_2 + 2S)x_1 + z_1^2]z_2=0,\\[1mm]
T z_2+x_2 z_1 w_3^2 +[(x_1 x_2 - 2 z_1 z_2)w_2 + (y_2 z_2 + x_2
z_1)\lambda + x_2 y_2 w_1]w_3-\\
\qquad - (y_2 z_2 + x_2 z_1)w_1 w_2 + x_2 (y_2 +
2S)w_1 \lambda - \\
\qquad -[x_1 z_2 + (y_1 + 2S)z_1]w_2^2 + \\
\qquad+ [(y_1 + 2S)y_2 - 2z_1 z_2]w_2 \lambda + y_2 z_2 \lambda^2
-\\
\qquad -[(y_1 + 2S)x_2 + z_2^2]z_1 =0,\\[1mm]
(T - x_1 x_2) ( y_1 - y_2) + 2 ( y_2+S) x_2 w_1^2  -2(y_1+S)x_1 w_2^2 + \\
\qquad+2  (x_2 z_1  w_1 - x_1 z_2 w_2 ) w_3 + x_2 z_1^2 - x_1 z_2^2
+ \\
\qquad +2(y_2 z_2 w_1 -y_1 z_1 w_2  )\lambda=0.
\end{array}
\end{equation}

It follows from Proposition~\ref{prop4} that this system is valid at
the points of the set $\mathfrak{O}_*$. To find all other cases
suppose
\begin{equation}\label{eq3_42}
U_1 U_2 \neq 0
\end{equation}
and express $y_1,y_2$ from (\ref{eq3_26}):
\begin{equation}\label{eq3_43}
\begin{array}{l}
\displaystyle{y_1= \frac{1}{2w_1 w_2 (w_3-\lambda)}\{2 U_2 - [w_1
w_2(w_3-\lambda)+w_2 z_1-w_1 z_2] U_1-}\\[3mm]
\phantom{y_1=} -2[w_1 z_2(w_3-\lambda)^2+ (x_2 w_1^2+z_1 z_2 +2
\lambda w_1
z_2)(w_3-\lambda)+\\
\phantom{y_1=} + x_2 z_1 w_1+x_1 z_2
w_2+2\lambda z_1 z_2]\},\\[2mm]
\displaystyle{y_2= \frac{1}{2w_1 w_2 (w_3-\lambda)}\{2 U_2 + [w_1
w_2(w_3-\lambda)+w_1 z_2-w_2 z_1] U_1-}\\[3mm]
\phantom{y_2=}-2[w_2 z_1(w_3-\lambda)^2+ (x_1 w_2^2+z_1 z_2 +2
\lambda w_2
z_1)(w_3-\lambda)+\\
\phantom{y_2=} +x_2 z_1 w_1+x_1 z_2 w_2+2\lambda z_1 z_2]\}.
\end{array}
\end{equation}

The determinant of the system (\ref{eq3_40}) with respect to $T,2S$
is equal to
$$
\Delta = x_1 w_2^2-x_2 w_1^2-(z_2 w_1- z_1 w_2) \lambda.
$$
If we suppose that $\Delta \equiv 0$ on some time interval, then the
sequence of the derivatives of this identity in virtue of
(\ref{eq3_8}) leads to (\ref{eq3_29}), i.e., to the points of
$\mathfrak{C}_0\cup\mathfrak{C}_1$. Consider then
\begin{equation}\label{eq3_44}
\Delta \neq 0
\end{equation}
and find from Eqs.\,(\ref{eq3_40})
\begin{equation}\label{eq3_45}
\begin{array}{l}
\displaystyle{S=\frac{1}{\Delta}[x_2 y_2 w_1^2  - x_1 y_1 w_2^2 + (
x_2 z_1 w_1 - x_1 z_2 w_2 )w_3 +}\\[2mm]
\qquad +(y_2 z_2 w_1 -y_1 z_1 w_2 )\lambda],\\[2mm]
\displaystyle{T=\frac{1}{\Delta}[A_1 B_1 - A_2 B_2].}
\end{array}
\end{equation}
Here
\begin{equation}\label{eq3_46}\notag
\begin{array}{l}
A_1 =(x_1 w_2+\lambda z_1)y_1+(x_1 w_3-z_1 w_1)z_2, \\
B_1=(w_2^2+x_2)w_1+\lambda w_2(w_3-\lambda)+\lambda z_2,\\
A_2=(x_2 w_1+\lambda z_2)y_2+(x_2 w_3-z_2 w_2)z_1,\\
B_2=(w_1^2+x_1)w_2+\lambda w_1(w_3-\lambda)+\lambda z_1.
\end{array}
\end{equation}

Substitute (\ref{eq3_43}) and (\ref{eq3_45}) into (\ref{eq3_41}) to
obtain the system of four equations of the type $E_i =0$
($i=1,...,4$), where $E_i = a_{i2} U_1^2+a_{i1} U_1+a_{i0} U_2$ with
some polynomials $a_{ij}$. For the proof of the theorem, there is no
need to use all equations of this system.  It is enough to consider,
for example, the zero points of the resultant of two simplest
functions $E_1, E_4$ with respect to $U_2$. We obtain ${4 w_1 w_2
U_1 R \Delta =0}$, where
$$
\begin{array}{l}
R = \lambda w_1 w_2 ( x_1 w_2+\lambda z_1) ( x_2 w_1+\lambda z_2)
U_1-\big\{ w_1 w_2[x_2 z_1 w_1+ x_1 z_2 w_2- \\
\phantom{Q = } -x_1 x_2 (w_3-\lambda)+2 \lambda z_1 z_2]+(z_1 z_2
w_3+ x_2 z_1 w_1+x_1 z_2 w_2)\lambda^2+z_1 z_2\lambda^3
\big\}\Delta.
\end{array}
$$
Due to (\ref{eq3_42}), $U_1 \neq 0$. At the points of $\mathfrak{C}
\backslash \mathfrak{L}$ the product $w_1 w_2$ is not identically
zero. The set $\mathfrak{C} \backslash
(\mathfrak{L}\cup\mathfrak{O})$ is, obviously, preserved by the
phase flow. Therefore, (\ref{eq3_44}) and (\ref{eq3_8}) imply
$R=0,\,R'=0$. These equations are linear in $y_1,y_2$ and yield the
expressions (\ref{eq3_35}) satisfying Eqs.\,(\ref{eq3_30}). Thus,
$\mathfrak{C} \backslash (\mathfrak{L}\cup\mathfrak{O}) \subset
\mathfrak{N}$.
\end{proof}

\begin{remark}\label{rem4}
The system~$(\ref{eq3_31})$ follows from Eqs.\,$(\ref{eq3_25})$ and
$(\ref{eq3_26})$. In particular,
$\mathfrak{O}_*=\mathfrak{O}\backslash \mathfrak{L}$. Then from
Proposition~\ref{prop4} and Lemma~\ref{lem5} we obtain that this
manifold lies completely in~$\mathfrak{C}_2$.
\end{remark}

\begin{remark}\label{rem5}
The integral $S$ given by $(\ref{eq3_45})$, when restricted to the
set $\mathfrak{N}$ has the form $(\ref{eq3_38})$. Indeed, it is
enough to substitute $(\ref{eq3_35})$ into the first formula
$(\ref{eq3_45})$ to obtain $(\ref{eq3_38})$. On the set
$\mathfrak{O}$ the same function $S$ in the substitution of
$(\ref{eq3_25})$ and $(\ref{eq3_43})$ takes the form
$(\ref{eq3_27})$. Therefore, the use of the same notation in
$(\ref{eq3_27})$ and $(\ref{eq3_38})$ is correct. The expressions
for $T$ can also be simplified. On the set $\mathfrak{N}$ we have
$T=2\lambda^2 S$, i.e., this function does not give rise to a new
partial integral independent of $S$. On the contrary, at the points
of the set $\mathfrak{O}$ we have
\begin{equation}\label{eq3_47}\notag
T = x_1 x_2 + z_1 z_2 -2 w_1 w_2 S.
\end{equation}
In the case $\lambda=0$ the same expression with the corresponding
function $S$ is the partial integral independent of $S$. The
equations of the integral manifold defined by the pair $S,T$ lead to
the separation of variables on $\mathfrak{O}$ {\rm \cite{KhRCD2}}.
\end{remark}

\section{The bifurcation diagram}\label{sec4}
The Lax representation for the considered problem found
in~\cite{ReySem} can be written in the form
\begin{equation} \label{eq4_1}
\displaystyle{L' = L M - M L},
\end{equation}
where
\begin{equation} \label{eq4_2}\notag
\begin{array}{l}
\displaystyle{L=\begin{Vmatrix} \displaystyle{2 \lambda} &
\displaystyle{\frac{x_2}{\varkappa}} & \displaystyle{-2 w_1} &
\displaystyle{\frac{z_2}{\varkappa}}
\\[2mm]
\displaystyle{-\frac{x_1}{\varkappa}} & \displaystyle{-2 \lambda} &
\displaystyle{-\frac{z_1}{\varkappa}} &
\displaystyle{2 w_1 } \\[2mm]
\displaystyle{-2 w_1} & \displaystyle{\frac{z_2}{\varkappa}} &
\displaystyle{-2 w_3} &
\displaystyle{-\frac{y_1}{\varkappa} - 4 \varkappa} \\[2mm]
\displaystyle{-\frac{z_1}{\varkappa}} & \displaystyle{2 w_2} &
\displaystyle{\frac{y_2}{\varkappa}+ 4\varkappa} & \displaystyle{2
w_3}
\end{Vmatrix}},\;
\displaystyle{M=\begin{Vmatrix} \displaystyle{-\frac{w_3}{2}} &
\displaystyle{0} & \displaystyle{\frac{w_2}{2}} &
\displaystyle{0}\\[2mm]
\displaystyle{0} & \displaystyle{\frac{w_3}{2}} & \displaystyle{0} &
\displaystyle{-\frac{w_1}{2}}\\[2mm]
\displaystyle{\frac{w_1}{2}} & \displaystyle{0} &
\displaystyle{\frac{w_2}{2}} &
\displaystyle{\varkappa}\\[2mm]
\displaystyle{0} & \displaystyle{-\frac{w_2}{2}} &
\displaystyle{-\varkappa} & \displaystyle{-\frac{w_3}{2}}
\end{Vmatrix}.}
\end{array}
\end{equation}
Here $\varkappa$ stands for the spectral parameter, the derivative
in~(\ref{eq4_1}) is calculated in virtue of the
system~(\ref{eq3_8}). The equation for the eigenvalues $\mu$ of the
matrix $L$ defines the algebraic curve associated with this
representation~\cite{BobReySem}. Let $s=2\varkappa^2$ and let
$h,k,g$ be the arbitrary constants of the integrals~(\ref{eq3_11}).
The equation of the algebraic curve takes the form
\begin{equation} \label{eq4_3}
\begin{array}{l}
\displaystyle{ \mu ^4 -  4   \mu ^2   [ \frac{p ^2}{s}  - (2  h +
\lambda ^2) + 2  s ] + 4   [\frac{r ^4}{s^2} + \frac{2}{s}(4  g -
2p^2 h-p^2\lambda ^2) + }\\[2mm]
\qquad +4(k + 2\lambda ^2 h)
 - 8\lambda ^2 s ] = 0.
\end{array}
\end{equation}

It is natural to suppose that the bifurcation diagram of the
momentum map~(\ref{eq3_4}) is included in the set of
values~$(g,k,h)$ such that the curve~(\ref{eq4_3}) either have
singular points or is reducible, i.e., the left part of
Eq.\,(\ref{eq4_3}) splits into the product of some rational
non-trivial expressions. In this way we can guess the result of the
following statement. Nevertheless, to obtain the complete proof of
it, we must fulfil the calculations on the above found critical
manifolds.

\begin{theorem}\label{th3}
The bifurcation diagram of the momentum map $G\times K \times H$ is
included in the union of the following (intersecting) subsets of
${\mathbb{R}}^3(g,k,h)$:

1) the pair of straight lines
\begin{equation} \label{eq4_4}
\Gamma_+: \left\{ \begin{array}{l} k = (a + b)^2, \\
\displaystyle{g= - \,a b (h-\frac{\lambda^2}{2});}
\end{array} \right. \qquad \Gamma_-: \left\{ \begin{array}{l} k = (a - b)^2, \\
\displaystyle{g=  \,a b (h-\frac{\lambda^2}{2});}
\end{array} \right.
\end{equation}

2) the surface
\begin{equation} \label{eq4_5}
\Gamma_1: \left\{ \begin{array}{l}
\displaystyle{k = 4 \lambda^2 s - 2\lambda^2 h + \frac{r^4}{4s^2},} \\[3mm]
\displaystyle{g=-\lambda^2 s^2 +
\frac{1}{2}p^2(h+\frac{\lambda^2}{2})-\frac{r^4}{4s}, \qquad s \in
{\mathbb{R}}\backslash\{0\};}
\end{array} \right.
\end{equation}

3) the surface
\begin{equation} \label{eq4_6}
\Gamma_2: \left\{ \begin{array}{l} \displaystyle{k = 3s^2
-4(h-\frac{\lambda^2}{2})s+ p^2 + (h-\frac{\lambda^2}{2})^2-
\frac{p^4-r^4}{4s^2},} \\[3mm]
\displaystyle{g=-s^3+(h-\frac{\lambda^2}{2})s^2+\frac{p^4-r^4}{4s},
\qquad s \in {\mathbb{R}}\backslash\{0\}.}
\end{array} \right.
\end{equation}
\end{theorem}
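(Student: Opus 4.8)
The plan is to deduce everything from Theorem~\ref{th2}. The bifurcation diagram $\Sigma$ is contained in the set $J(\mathfrak C)$ of critical values of $J$ (on each level $\{H=h\}$, which is compact, the restriction of $J$ is proper, so bifurcation values are critical values, and a point critical for $(G,K)|_{\{H=h\}}$ is critical for $J$). By Theorem~\ref{th2}, $\mathfrak C=\mathfrak L\cup\mathfrak N\cup\mathfrak O$, so it suffices to prove
$$
J(\mathfrak L)\subset\Gamma_+\cup\Gamma_-,\qquad J(\mathfrak N)\subset\Gamma_1,\qquad J(\mathfrak O)\subset\Gamma_2 .
$$
The lower strata are then covered automatically: by the proof of Theorem~\ref{th2}, $\mathfrak C_0\cup\mathfrak C_{11}\subset\mathfrak L$ and $\mathfrak C_{12}\subset\mathfrak N\cap\mathfrak O$, so the trajectories $(\ref{eq2_18})$ map into $\Gamma_+\cup\Gamma_-$ and the family $(\ref{eq3_14})$--$(\ref{eq3_17})$ maps into $\Gamma_1\cap\Gamma_2$.

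For $\mathfrak L$ the computation is immediate. By the proof of Theorem~\ref{th2}, $dK\equiv 0$ and $dG\equiv\pm ab\,dH$ on $\mathfrak L$, so $K$ is constant on each component of $\mathfrak L$ and $G$ is affine in $H$ there. Putting $w_1=w_2=z_1=z_2=0$ in $(\ref{eq3_11})$ gives $K=x_1x_2$, while $(\ref{eq3_9})$, $(\ref{eq3_10})$ reduce on $\mathfrak L$ to $x_1y_2=x_2y_1=r^2$, $x_1x_2+y_1y_2=2p^2$, whence $(x_1x_2)^2-2p^2\,x_1x_2+r^4=0$ and $K=x_1x_2=p^2\pm 2ab=(a\pm b)^2$. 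The additive constant in $G=\pm ab\,H+\mathrm{const}$ is fixed by evaluating at the equilibria $(\ref{eq2_15})$, which lie in $\mathfrak L$; this gives $g=\mp ab\,(h-\lambda^2/2)$, i.e.\ exactly $\Gamma_+$ and $\Gamma_-$.

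For $\mathfrak N_*=\mathfrak N\backslash(\mathfrak C_0\cup\mathfrak C_1)$ I would use the partial integral $S$ of $(\ref{eq3_38})$, whose value I denote $s$. By Remark~\ref{rem5}, $T=2\lambda^2 S$ on $\mathfrak N$, so the preserved relation $(\ref{eq3_39})$ reads $2\,dG+S\,dK+(2\lambda^2 S-p^2)\,dH=0$ on $\mathfrak N_*$. Taking $(h,s)=(H,S)$ as local coordinates on the set of critical tori and equating the $dh$ and $ds$ parts of this identity forces $K$ and $G$ to be affine in $h$: one gets $K=-2\lambda^2 h+\varphi'(s)$ and $2G=p^2h+\varphi(s)-s\varphi'(s)$ for a single one-variable function $\varphi$. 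The only genuine calculation left is to evaluate $K$ on $\mathfrak N_*$ by substituting $(\ref{eq3_35})$ into $(\ref{eq3_11})$ and reducing modulo $(\ref{eq3_9})$, $(\ref{eq3_10})$ and $S=s$; this yields $\varphi'(s)=4\lambda^2 s+r^4/(4s^2)$, hence $\varphi(s)=2\lambda^2 s^2-r^4/(4s)+\tfrac12 p^2\lambda^2$ after matching the integration constant, and substitution back reproduces the two equations of $\Gamma_1$. For $\mathfrak O_*=\mathfrak O\backslash\mathfrak L$ one argues in the same spirit with the partial integral $S$ of $(\ref{eq3_27})$; here $T=x_1x_2+z_1z_2-2w_1w_2 S$ by Remark~\ref{rem5} is not a constant multiple of $S$, so $(\ref{eq3_39})$ alone does not determine the answer and one computes $K$ and $G$ directly from $(\ref{eq3_11})$ using $(\ref{eq3_25})$, $(\ref{eq3_26})$, $S=s$ and the Casimirs, obtaining the equations of $\Gamma_2$; only the $ds$ part of $(\ref{eq3_39})$ survives verbatim, giving $\partial_s(2G+sK)=K$, a useful check against $\Gamma_2$.

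The main obstacle is precisely these reductions on $\mathfrak N_*$ and $\mathfrak O_*$: expressing bulky rational functions of the nine complex variables through $h$ and $s$ modulo the ideal generated by $(\ref{eq3_9})$, $(\ref{eq3_10})$ and the defining equations of the critical manifold, which in practice is a task for computer algebra. Two consistency checks accompany the result. First, the Lax curve $(\ref{eq4_3})$: with $s=2\varkappa^2$ and the values $(g,k)$ of $\Gamma_1$ (resp.\ $\Gamma_2$) the quartic in $\mu$ becomes reducible, so the bifurcation diagram indeed lies in the discriminant locus of $(\ref{eq4_3})$ --- this is also how the shape of $\Gamma_1$, $\Gamma_2$ is guessed at the outset. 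Second, the Poisson brackets from Theorem~\ref{th2}: the factor $8s^3\lambda^2-r^4$ of $(\ref{eq3_37})$ equals $-4s^2\,\partial_s g$ along $\Gamma_1$, and $\{U_1,U_2\}$ on $\mathfrak O_*$ equals $4s\,\partial_s g$ along $\Gamma_2$, so the degeneracy loci of the critical subsystems reappear exactly as the singular points of the parametrizations $(\ref{eq4_5})$, $(\ref{eq4_6})$. Together with the inclusions above this gives $J(\mathfrak C)\subset\Gamma_+\cup\Gamma_-\cup\Gamma_1\cup\Gamma_2$, hence the theorem.
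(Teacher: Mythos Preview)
Your proof is correct and follows essentially the same route as the paper: reduce to Theorem~\ref{th2}, then on each of $\mathfrak L$, $\mathfrak N\setminus\mathfrak L$, $\mathfrak O\setminus\mathfrak L$ substitute the defining relations (\ref{eq3_29}), (\ref{eq3_35}), resp.\ (\ref{eq3_43}) with $U_1=U_2=0$, into (\ref{eq3_11}) and verify (\ref{eq4_4})--(\ref{eq4_6}). The paper carries out exactly these substitutions directly.

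The one organisational difference is your use of (\ref{eq3_39}) on $\mathfrak N_*$: from $T=2\lambda^2 S$ you first derive that $k+2\lambda^2 h$ and $2g-p^2 h$ depend on $s$ alone and are linked by $2g_s+s\,k_s=0$, so a single substitution (for $K$) determines both up to one integration constant. The paper bypasses this and checks the two identities for $k$ and $g$ separately; your route halves the algebra but leaves that integration constant to be fixed by one extra evaluation of $G$ (which you should state explicitly --- e.g.\ at a point of $\mathfrak C_{12}$, or by directly computing $G$ once). On $\mathfrak O_*$ both approaches coincide, since $T$ is not a function of $S$ there. Your consistency observations tying (\ref{eq3_37}) and $\{U_1,U_2\}$ to $\partial_s g$ along $\Gamma_1,\Gamma_2$ are correct and nice, though the paper does not use them.
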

\begin{proof}
Let $\zeta \in \mathfrak{L}$. Substitution of the values $z_1=z_2=0$
into (\ref{eq3_9}) and (\ref{eq3_10}) yields $x_1 x_2 =(a \pm
b)^2$, $y_1 y_2 =(a \mp b)^2$. Then from (\ref{eq3_11}),
(\ref{eq3_29}) we obtain the equations defining the lines
(\ref{eq4_4}).

Let $\zeta \in \mathfrak{N}\backslash \mathfrak{L}$. Take the
constant of the partial integral~(\ref{eq3_38}) for the parameter $s$
in (\ref{eq4_5}), substitute the expressions (\ref{eq3_11}) for the
corresponding constants, and fulfil the change (\ref{eq3_35}). Then
both Eqs.\,(\ref{eq4_5}) become the identities. Therefore,
$J(\mathfrak{N}\backslash \mathfrak{L}) \subset \Gamma_1$.

The inclusion $J(\mathfrak{O}\backslash \mathfrak{L}) \subset
\Gamma_2$ is proved in a similar way. We take the constant of the
partial integral (\ref{eq3_27}) for the parameter $s$ in
(\ref{eq4_6}) and fulfil the substitution (\ref{eq3_43}) with
$U_1=U_2=0$.
\end{proof}
\begin{remark}\label{rem6}
Note that the shift of the energy level $\tilde h = h-\lambda^2/2$
makes the equations of the lines $\Gamma_{\pm}$ and the surface
$\Gamma_2$ independent of $\lambda$. Thereby obtained equations are
identical with the corresponding equations of the case $\lambda=0$
{\rm \cite{KhRCD1}}. The surface $\Gamma_1$ is obtained as a
perturbation (with respect to $\lambda$) of two tangent to each
other sheets of the bifurcation diagram of the case $\lambda=0$,
i.e., the plane $k=0$ and the slanted parabolic cylinder $(p^2
h-2g)^2-r^4 k =0$. Thus, it is easy to view the evolution of the
Appelrot classes {\rm \cite{Appel}} of the S.\,Kowalevski case in
the process of two-way generalizations---adding the second force
field and, afterwards, the non-zero gyrostatic momentum.
\end{remark}

The equations given in Theorem~\ref{th3} are in the following sense
convenient. Let us fix the energy constant $h$. Then we obtain the
parametric equations of a one-dimensional set in the plane $(g,k)$
(with the finite number of singular points). This set is the
bifurcation diagram $\Sigma_h$ of the restriction of the pair of
integrals $G,K$ onto the iso-energetic surface $\{H=h\}\subset P^6$,
which is always compact. In particular, all diagrams $\Sigma_h$ lie
in the restricted area of the $(g,k)$-plane and are easily drawn
numerically. The analytical investigation of the types of the
diagrams $\Sigma_h$ with respect to the essential parameters
$(b/a,\lambda/\sqrt{a}, h/a)$ is a necessary but technically
complicated problem. Nevertheless, it must be solvable. Indeed, the
set of double points and cusps of the curves $\Gamma_{1,2}$ in the
$(g,k)$-plane is easily defined and investigated analytically.
Moreover, the values of the first integrals on the periodic motions
(\ref{eq3_14})--(\ref{eq3_17}) define the points of transversal
intersections $\Gamma_1 \cap \Gamma_2$. This fact, at least,
guarantees that the numerical algorithm can be built for effective
calculation of knots of one-dimensional cell complex $\Sigma_h$ for
any $h$. In turn, it should be possible to find all cases of
bifurcations of the set of these knots with respect to the
parameters defining the above set of periodic motions.

\end{document}